\def\sharp{\#}
\newcommand{\bel}[1]{\begin{equation}\label{#1}}
\newcommand{\be}{\begin{equation}}
\newcommand{\ba}{\begin{eqnarray}}
\newcommand{\ea}{\end{eqnarray}}
\newcommand{\qe}{\end{equation}}
\newcommand{\R}{{\mathbb R}}
\newcommand{\N}{{\mathbb N}}
\newcommand{\SP}{\mathbb{S}}
\newcommand{\Ric}{\mathrm{Ric}_F}
\newcommand{\fmp}{\mathcal{FC}_{+}}
\newcommand{\fm}{\mathcal{FC}_{+}^{\mathbb{S}^2}}
\newcommand{\CC}{\mathcal{C}_{+}}
\newcommand{\Hmm}[1]{\leavevmode{\marginpar{\tiny%
$\hbox to 0mm{\hspace*{-0.5mm}$\leftarrow$\hss}%
\vcenter{\vrule depth 0.1mm height 0.1mm width \the\marginparwidth}%
\hbox to
0mm{\hss$\rightarrow$\hspace*{-0.5mm}}$\\\relax\raggedright #1}}}
\newtheorem{theorem}{Theorem}[section]
\newtheorem{lemma}[theorem]{Lemma}
\newtheorem{prop}[theorem]{Proposition}
\newtheorem{fact}{Fact}
\def\G{\mathcal{G}}
\def\PCnC{\mathcal{C}_+}
\def\elongatedpyramid#1{\foreach \x in {1,...,#1}{\draw (90+\x*360/#1:0)
-- (90+\x*360/#1:0.5) -- (90+\x*360/#1:1); \draw
(90+\x*360/#1:0.5)--(90+360/#1+\x*360/#1:0.5);
\draw (90+\x*360/#1:1)--(90+360/#1+\x*360/#1:1);
}}
\def\prism#1#2{\foreach \x in {1,...,#1}{\draw  (90+\x*360/#1:#2) -- (90+\x*360/#1:1); \draw
(90+\x*360/#1:#2)--(90+360/#1+\x*360/#1:#2);
\draw (90+\x*360/#1:1)--(90+360/#1+\x*360/#1:1);
}}
\def\antiprism#1#2{\foreach \x in {1,...,#1}{\draw[rotate=\x*360/#1]
(360/#1:#2) -- (180/#1:1) -- (0:#2)--cycle;
\draw[rotate=\x*360/#1](180/#1:1)--(-180/#1:1);
}}
\def\trapezocupola#1#2{\foreach \x in {1,...,#1}{\draw[rotate=360*\x/#1] (90+180/#1:#2)--(90:1)--(90+360/#1:1)--(90+180/#1:#2)--(0:0);}}
\begin{document}
\title[Graphs on surfaces with positive Frrman or corner cuvatures]{Graphs
on surfaces with positive Forman curvature or corner curvature}
\date{}

\author{Yohji Akama} \address[Yohji Akama]{Mathematical Institute, Graduate School of Science, Tohoku University,
Sendai, 980-0845, Japan.  \\ Tel.: +81-22-795-6402, Fax: +81-22-795-6400}
\email{yoji.akama.e8@tohoku.ac.jp}
\author{Bobo Hua}
 \address[Bobo Hua]{School of Mathematical Sciences, LMNS,
 Fudan University, Shanghai 200433, China; Shanghai Center for
Mathematical Sciences, 2005 Songhu Road, Shanghai 200438, China} \email{bobohua@fudan.edu.cn}
\author{
Yanhui Su}
\address[Yanhui Su]{
 College of Mathematics and Computer Science, Fuzhou University, Fuzhou
 350116, China} \email{suyh@fzu.edu.cn}
 \author{
 Haohang Zhang}
 \address[Haohang Zhang]{Shanghai Center for
Mathematical Sciences, Fudan University, Shanghai 200433,
China}
\email{18300180032@fudan.edu.cn}

\begin{abstract} 
 On one hand, we study the class of graphs on surfaces, satisfying
 tessellation properties, with positive Forman curvature on each edge. 
Via medial graphs, we provide a new proof for the finiteness of the
 class, and give a complete classification. On the other hand, we
 classify the class of graphs on surfaces with positive corner
 curvature.
\keywords{Forman curvature \and corner curvature \and finiteness \and classification, medial
 graph}
\subjclass{52B05 \and 05C10 \and 53-04}
\end{abstract}
\maketitle

\tableofcontents

\maketitle









\section{Introduction}
The Gaussian curvature of a smooth surface is well studied in differential geometry, which describes the convexity of the surface. For a polyhedron in $\R^3,$ the discrete Gaussian curvature, as a measure, concentrates on the set of vertices.  In the graph theory, the combinatorial curvature of a planar graph, which serves as the discrete Gaussian curvature of a canonical piecewise flat surface, was introduced by \cite{MR0279280,MR0410602,MR919829,Ishida90} respectively. It has been extensively studied in the literature; see e.g.
\cite{Woess98,MR1600371,MR1864922,MR1797301,MR1894115,MR1923955,MR2038013,SY04,RBK05,MR2243299,DM07,CC08,MR2466966,MR2470818,MR2558886,MR2818734,MR2826967,MR3624614,Gh17}.

Let $S$ be a (possibly noncompact) connected surface without boundary. Let $(V,E)$ be a (possibly infinite) locally finite, undirected, simple graph with the set of vertices $V$ and the set of edges $E.$ We may regard $(V,E)$ as a $1$-dimensional topological space. Let $\varphi:(V,E)\to S$ be an topological embedding. We denote by $F$ the set of faces induced by the embedding $\varphi$, i.e. connected components of the complement of the embedding image of $(V,E)$ in $S.$ We write $G=(V,E,F)$ for the cellular complex structure induced by the embedding, which is called a graph on a surface \cite{MT01} (or a semiplanar graph \cite{MR3318509}). For any $\sigma\in F,$ we denote by $\overline{\sigma}$ the closure of $\sigma$ in $S,$ which is called the closed face. We say that a semiplanar graph $G=(V,E,F)$ is a \emph{tessellation} of $S$ if the following hold, see e.g. \cite{MR2826967}:
\begin{enumerate}[(i)]
\item For any compact set $K\subset S,$ $K$ can be covered by finitely many closed faces.
\item Every closed face is homeomorphic to a closed disk whose boundary consists of finitely many edges of the graph.
\item Every edge is contained in exactly two different closed faces.
\item If two closed faces intersect, then the intersection is either a vertex or the closure of an edge.
\end{enumerate} 


One can show that a tessellation of $S$ is finite if and only if $S$ is compact. It is called a planar tessellation (resp. a tessellation in the real projective plane) if $S$ is the sphere $\SP^2$ or the plane $\R^2$ (resp. $\R P^2$). In this paper, we always consider tessellations, and call planar tessellations planar graphs for simplicity. 

For a graph on a surface $G=(V,E,F),$ two vertices $x,y$ are called neighbors if there is an edge connecting $x$ and $y,$ denoted by $x\sim y.$ Two elements in $V,E,F$ are called \emph{incident} if the closures of their embedding images intersect. If one is contained in the closure of the other, then we write like $x\prec e, e\prec \sigma, x\prec \sigma,$ for $x\in V, e\in E, \sigma\in F.$ We denote by $|x|$ (resp. $|\sigma|$) the degree of a vertex $x$ (resp. a face $\sigma$), i.e. the number of neighbors of $x$ (resp. the number of edges incident to $\sigma$). In this paper, we only consider semiplanar graphs satisfying the following: for any vertex $x$ and face $\sigma,$ $$|x|\geq 3,\ |\sigma|\geq 3.$$

Given a graph $G=(V,E,F)$ on a surface $S,$ 
the \emph{combinatorial curvature} at a vertex $x$ is defined as 
\begin{equation*}\label{def:comb}\Phi(x)=1-\frac{|x|}{2}+\sum_{\sigma\in
F: x\prec{\sigma}}\frac{1}{|\sigma|}.\end{equation*} 
We endow $S$ with a canonical piecewise flat metric as follows: assign each edge length one, replace each face by a regular Euclidean polygon of side-length one with same facial degree, and glue these polygons along the common edges; see \cite{BBI01} for gluing metrics. The ambient space $S$ equipped with the gluing metric is called the \emph{(regular Euclidean) polyhedral surface} of $G,$ denoted by $S(G).$ For the metric surface $S(G)$, the generalized Gaussian curvature is a measure concentrated on vertices, whose mass at each vertex $x$ is given by the angle defect $K(x),$ i.e. $2\pi$ minus the total angle at $x.$ One easily sees that
$$\Phi(x)=\frac{1}{2\pi}K(x),\quad \forall x\in V,$$ where $K(x)$ is  the angle defect, i.e. the discrete Gaussian curvature, at the vertex $x.$ If $S$ is compact, the discrete Gauss-Bonnet theorem reads as
\begin{equation}\label{eq:GBthm}\sum_{x\in V}\Phi(x)=\chi(S),\end{equation} where $\chi(\cdot)$ is the Euler characteristic of $S.$ 

A planar graph $G$ has nonnegative combinatorial curvature if and only if the polyhedral surface $S(G)$ is a generalized convex surface in the sense of Alexandrov, see \cite{BuragoGromovPerelman92,BBI01,MR3318509}. Higuchi \cite{MR1864922} conjectured a discrete Bonnet-Myers theorem that a graph on a surface $G$ with positive combinatorial curvature everywhere is finite. It was confirmed by DeVos and Mohar 
\cite{DM07}, see e.g. \cite{CC08,MR2470818,MR3624614,oldridge17:_charac,Gh17} for further developments.
\begin{theorem}[\cite{DM07}]\label{thm:DeVosMohar2} For a graph $G=(V,E,F)$ on a surface $S$, if 
$\Phi(x)>0$ for any $x\in V,$ then $G$ is finite. Moreover, $S=\SP^2$ or $\R P^2.$
\end{theorem}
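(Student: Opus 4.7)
The plan is to prove the theorem in three stages: reduce the possible topology of $S$ using the Alexandrov structure of $S(G)$; prove finiteness of $G$ by a discharging argument; then combine these to classify $S$.

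Since $\Phi(x)>0$ for all $x$, the polyhedral surface $S(G)$ is a generalized convex Alexandrov surface with strictly positive curvature (by the Alexandrov characterization recalled in the excerpt). The Cohn–Vossen inequality for such surfaces, which reduces to the Gauss–Bonnet identity \eqref{eq:GBthm} when $S$ is compact, gives
$$\sum_{x\in V}\Phi(x)\le \chi(S).$$
Strict positivity of the left-hand side forces $\chi(S)>0$. Among connected surfaces without boundary this leaves only $\SP^2$, $\R P^2$ and the open plane $\R^2$, so the topological part reduces to excluding infinite tessellations of the plane.

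To show finiteness, I would try to establish a uniform lower bound $\widetilde\Phi(x)\ge c>0$ for a suitably \emph{discharged} curvature. A uniform bound on $\Phi$ itself is not available: the vertex type $(3,3,3,n)$ at a degree-four vertex has $\Phi=1/n$, which tends to zero. The remedy is to redistribute a controlled fraction of curvature from vertices of large curvature, or equivalently from large faces, to their low-curvature neighbours. The underlying structural fact is that a vertex with very small $\Phi(x)$ must sit on the boundary of a face of very large degree, and every such face inherits positive combined contributions from many of its boundary vertices; an appropriate charge-transfer along incidences converts this into a uniform positive lower bound. Once $\widetilde\Phi\ge c$ is secured, summing yields
$$|V|\cdot c\le \sum_{x\in V}\widetilde\Phi(x)=\sum_{x\in V}\Phi(x)\le\chi(S)\le 2,$$
so $V$ is finite. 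Finiteness of $V$ together with local finiteness and axiom (i) of a tessellation forces $S$ to be compact, eliminating $\R^2$; combined with $\chi(S)>0$ this leaves $S\in\{\SP^2,\R P^2\}$.

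The hard part will be the discharging step: designing a charge-transfer rule and verifying the uniform bound $\widetilde\Phi\ge c$. This demands a systematic classification of vertex patterns with small positive curvature, organised into infinite prism- and antiprism-type families together with a finite sporadic list, and it is precisely the technical core of \cite{DM07} which cannot be compressed into a routine computation.
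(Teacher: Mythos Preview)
The paper does not prove this statement; Theorem~\ref{thm:DeVosMohar2} is quoted from DeVos--Mohar \cite{DM07} and used as a black box (for instance in the proof of Theorem~\ref{thm:mainpfm} in Section~\ref{sec:forman}, where it is applied to the medial graph). There is therefore no ``paper's own proof'' to compare your attempt against.

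Your outline is essentially that of the cited source: the heart of the matter is a discharging argument producing a uniform positive lower bound on a redistributed curvature, and you correctly flag this as the technical core that cannot be compressed. One comment on the logical structure of your write-up: the Alexandrov/Cohn--Vossen prelude is both unnecessary and somewhat delicate. You invoke Cohn--Vossen to deduce $\chi(S)>0$ and hence $S\in\{\SP^2,\R P^2,\R^2\}$ before proving finiteness, but Cohn--Vossen for polyhedral surfaces of \emph{a priori} infinite topological type (infinitely many ends or handles) requires justification, and the inequality $\sum_x\Phi(x)\le\chi(S)$ is not even well posed until you know the left side is finite. The cleaner route---and the one actually taken in \cite{DM07}---is purely combinatorial: establish finiteness of $V$ first via discharging, deduce compactness of $S$ from the tessellation axioms, and then read off $\chi(S)>0$ from the discrete Gauss--Bonnet identity \eqref{eq:GBthm}. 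Your steps are in the right spirit but in the wrong order.
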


For a planar graph $G$ with an embedding, one can define the dual graph $G^*$: the vertices of $G^*$ are corresponding to faces of $G,$ the faces of $G^*$ are corresponding to vertices of $G,$ and two vertices in $G^*$ are adjacent if and only if the corresponding faces in $G$ share a common edge. One can show that for a finite graph $G,$ it is a tessellation if and only if so is $G^*,$ see e.g. \cite{MR2243299}.
For a graph on a surface $G$ with positive combinatorial curvature, the
dual graph may not have nonnegative combinatorial curvature; 7-gonal
prism~(resp. 5-gonal antiprism~(Figure~\ref{fig:ap6}~(right))) has positive combinatorial
curvature everywhere, but the dual, the 7-gonal bipyramid~(resp. 5-gonal
pseudo-double wheel~(Figure~\ref{p_expansion}~(upper right))), has negative
curvature at the two apexes. This means that the dual operation doesn't preserve the class of graphs on surfaces with positive combinatorial curvature. We will study some curvature notions of graphs on surfaces, for which the dual operation is closed on the class of graphs with positive curvature.

Motivated by Bochner techniques for differential forms in Riemannian geometry, Forman~\cite{MR1961004} introduced the curvature on general CW complexes, which is now called the Forman curvature. For a compact Riemannian manifold $M,$ let $\Delta_p$ be the Hodge Laplacian on $p$-forms, $p\in \N_0.$ The Bochner-Weitzenb\"ock formula reads as $$\Delta_p=(\nabla_p)^*\nabla_p+F_p,$$ where $\nabla_p$ is the Levi-Civita covariant derivative operator on $p$-forms and $F_p$ denotes the curvature operator on $p$-forms.
On a CW complex, Forman derived an analogous formula for the discrete Hodge Laplacian and defined the remainder term $F_p$ as the discrete curvature on $p$-cells. In this paper, we only consider the Forman curvature $F_1$ on $1$-cells, i.e. edges, with weight one everywhere. This curvature $F_1$ corresponds to the discrete analog of Ricci curvature, and we will denote it by $\Ric$ in this paper.
For a graph on a surface $G=(V,E,F),$
two edges $e_1$ and $e_2$ are called \emph{parallel neighbors} if one and only one of the following holds:
\begin{enumerate}\item There exists $x\in V$ such that $x\prec e_1,x\prec e_2.$ \item There 
exists $\sigma\in F$ such that $e_1\prec \sigma, e_2\prec \sigma.$
\end{enumerate} 
 The Forman curvature of an edge $e$ is defined as, see~\cite{MR1961004},
\begin{equation}\label{eq:deffm}\Ric(e)=\sharp \{\sigma\in F: e\prec \sigma\}+\sharp \{x\in V: x\prec e\}-\sharp\{\mathrm{parallel\ neighbors\ of }\ e\}.\end{equation} We are interested in 
graphs on surfaces with positive Forman curvature (everywhere). Note
 that a graph on surface with positive Forman curvature may not be a
 tessellation, see e.g. Figure~\ref{fig:columnC4}.
  \begin{figure}[ht]\centering
  \begin{tikzpicture}
   \draw (1,-1)-- (1,1) -- (-1,1)--(-1,-1)--cycle;
  \end{tikzpicture}
 \caption{The graph $C_4$ embedded in $\SP^2$. Each edge $e$ has $\Ric(e)=3.$\label{fig:columnC4}}
  \end{figure}
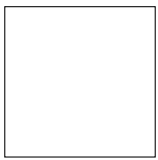
  There are infinitely many nontessellation, planar graphs with
 positive Forman curvature (everywhere). For example, the star graph
 $K_{1,n}$ $(n\ge1)$ has positive Forman curvature $3$ everywhere.
 In this paper, we only study the class of tessellations on surfaces with positive Forman curvature, denoted by $\mathcal{FC}_{+}.$ One easily checks that the class $\mathcal{FC}_{+}$ is closed under the dual operation.  Forman proved discrete analogs of Bochner's theorem and the Bonnet-Myers theorem for ``quasi-convex'' regular CW complexes, see \cite[Theorem~2.8 and Theorem~6.3]{MR1961004}. In our setting, for a graph $G=(V,E,F)$ on a surface $S,$ they state that if $\Ric(e)>0$ for each edge $e,$ then $H^1(S,\R)=0$ and the diameter of $X$ is finite, respectively.
The above theorems yield the following result.
\begin{theorem}[\cite{MR1961004}]\label{thm:mainpfm} Let $G=(V,E,F)$ be a tessellation on a surface $S$ satisfying $\Ric(e)>0$ for all $e\in E.$ Then $G$ is finite and $S=\SP^2$ or $\R P^2.$ 
\end{theorem}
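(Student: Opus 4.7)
The plan is to combine the two conclusions already attributed to Forman in the paragraph preceding the theorem statement, namely the discrete Bochner theorem $H^{1}(S,\R)=0$ and the discrete Bonnet--Myers theorem (finite diameter), and then close the argument with elementary surface topology.

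First I would use Forman's Bonnet--Myers conclusion, which asserts that under the hypothesis $\Ric(e)>0$ for every $e\in E$, the combinatorial diameter of the $1$-skeleton of $G$ is finite. Since $G$ is locally finite by assumption, a finite diameter forces the vertex set $V$ to be finite. The edge set $E$ and face set $F$ are then finite as well (each edge joins two vertices in a locally finite simple graph, and each face is bounded by a closed walk in the finite $1$-skeleton). By axiom (i) in the tessellation definition the surface $S$ is covered by the finite union of closed faces, so $S$ is compact.

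Next I would invoke Forman's Bochner vanishing theorem: the hypothesis $\Ric>0$ on every $1$-cell gives $H^{1}(S(G),\R)=0$, and since $S(G)$ is homeomorphic to $S$, we get $H^{1}(S,\R)=0$. At this point the problem reduces to listing the closed connected surfaces (with or without orientability) whose first real Betti number vanishes. From the classification of closed surfaces, $H^{1}(\cdot,\R)$ has rank $2g$ for an orientable surface of genus $g$ and rank $k-1$ for a non-orientable surface of genus $k$; so $b_{1}=0$ happens only for $g=0$ (that is $S=\SP^{2}$) or for $k=1$ (that is $S=\R P^{2}$). This completes the classification.

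The only step that requires care is checking that the cellular complex coming from a tessellation of a surface fits into the hypotheses of Forman's general CW-complex theorems (regularity and the ``quasi-convex'' condition in \cite{MR1961004}), so that one may legitimately apply the Bochner and Bonnet--Myers estimates to $F_{1}=\Ric$ with unit weights. Once this verification is in place, the remainder of the argument is purely formal. I expect this verification to be the main obstacle; it should, however, be immediate from the definitions because each closed face is homeomorphic to a disk whose boundary consists of finitely many edges, making the complex a regular CW structure on $S$, and because the weight-one assignment to all cells automatically satisfies Forman's quasi-convexity requirement in this $2$-dimensional setting.
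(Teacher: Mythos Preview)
Your argument is correct, and it is essentially the route the paper itself attributes to Forman just before stating the theorem: invoke the discrete Bonnet--Myers theorem to get finite diameter (hence finiteness and compactness), invoke the discrete Bochner theorem to get $H^1(S,\R)=0$, and finish with the classification of closed surfaces. The caveat you flag about verifying the quasi-convex regular CW hypotheses is appropriate, and the paper does not dispute that this works.

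However, the paper's \emph{own} proof is deliberately different. The point of the paper here is to give a \emph{new} proof that avoids Forman's Jacobi-field machinery. The paper constructs the medial graph $G'$ of $G$, shows $G'$ is again a tessellation of $S$, uses the formula $\Ric(e)=16-(|x_1|+|x_2|+|f_1|+|f_2|)$ to list the finitely many possible vertex patterns of $M_e$ in $G'$, observes that each such pattern has positive combinatorial curvature $\Phi(M_e)>0$, and then applies DeVos--Mohar's resolution of Higuchi's conjecture to $G'$ to conclude that $G'$ is finite and $S=\SP^2$ or $\R P^2$. Finiteness of $G'$ gives finiteness of $G$ since $V'=E$.

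What each approach buys: your route is short once Forman's heavy general theorems are granted, but it treats those theorems as a black box and, as the paper notes, Forman's diameter bound relies on combinatorial Jacobi fields. The paper's medial-graph route is self-contained at the combinatorial level modulo DeVos--Mohar, and more importantly it sets up the machinery used later: the same medial-graph reduction, together with the explicit list of admissible vertex patterns and the Gauss--Bonnet bound $\#V'\le 24$, is what drives the computer classification of the class $\mathcal{FC}_+$.
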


\begin{figure}[ht]\centering
  \begin{tikzpicture}[scale=2]
   \coordinate (a) at (-1,-.5);
   \coordinate (b) at (-1,.5);
   \coordinate (c) at (-1/2,0);

   \node[below] at (c) {$\ \ x_1$};
   
   \coordinate (d) at (1,-.5);
   \coordinate (e) at (1,.5);
   \coordinate (f) at (1/2,0);
   \coordinate (g) at (3/2,0);

   \node[above] at (f) {$x_2\ \ $};

   \node at (0,.4) {$f_1$};
   \node at (0,-.4) {$f_2$};
   \node[above] at (0,0) {$e$};
   
   \draw (a) -- (c) -- (b);
   \draw (d) -- (f) -- (e);
   \draw (c) -- (f) -- (g);

   \coordinate (ac) at (-3/4,-1/4);
   \coordinate (bc) at (-3/4,1/4);
   \coordinate (df) at (3/4,-1/4);
   \coordinate (fe) at (3/4,1/4);
   \coordinate (cf) at (0,0);
   \coordinate (fg) at (1,0);

  \end{tikzpicture}
 \caption{\label{fig:Forman}}
\end{figure}

 One of main difficulties for the Bonnet-Myers theorem above is that the total Forman curvature of an infinite graph on a surface is possibly infinite. So that we don't have the control for the number of edges in the case of positive Forman curvature directly. Forman circumvented the difficulty by using (combinatorial) Jacobi fields. In this paper, we give a new proof of the above result without using Jacobi fields. 
The proof strategy is as follows: for a tessellation $G$ of a surface $S,$ we obtain a formula of the Forman curvature of an edge $e,$
\begin{equation}\label{eq:rc}\Ric(e)=16-(|x_1|+|x_2|+|f_1|+|f_2|),\end{equation} where $x_1,x_2\in V$ and $f_1,f_2\in F$ such that $x_1\prec e,x_2\prec e, e\prec f_1, e\prec f_2,$ 
see Figure~\ref{fig:Forman} and Proposition~\ref{prop:FC1}.  For $G,$ we construct a \emph{medial graph} $G',$ a graph on a surface $S$, associated to $G,$ whose vertices correspond to the set of edges in $G,$ identified with the midpoints of edges, and whose faces correspond to the set of vertices and faces in $G,$ see Section~\ref{sec:pre} for details. For a graph $G$ on a surface with positive Forman curvature, by the structure of medial graphs and \eqref{eq:rc}, we will prove that the medial graph $G'$ has positive combinatorial curvature everywhere, see Proposition~\ref{prop:listpfm}. By Theorem~\ref{thm:DeVosMohar2}, $G'$ is a finite graph and the ambient space $S$ is $\SP^2$ or $\R P^2.$ This provides a new proof of the theorem. 

Via medial graphs, we classify the class $\mathcal{FC}_{+}.$ By the discrete Gauss-Bonnet theorem, we estimate that the number of vertices in a medial graph of $G$ in $\mathcal{FC}_{+}$ is at most 24, see Lemma~\ref{thm:rp2}. We denote by $\mathcal{FC}_{+}^{\SP^2}$ (resp. $\mathcal{FC}_{+}^{\R P^2}$) planar graphs (resp. graphs on $\R P^2$) in $\mathcal{FC}_{+}.$ Since the medial graph is 4-regular, we enumerate the medial graphs of planar graphs in $\mathcal{FC}_{+}^{\SP^2}$ using the algorithm of spherical quadrangulations by Brinkmann et al. \cite{MR2186681}. For the graphs on $\R P^2$ in $\mathcal{FC}_{+},$ we use the classification of  $\mathcal{FC}_{+}^{\SP^2}$ and the properties of the double covering map $\SP^2\to \R P^2.$ 
\begin{theorem}\label{thm:classPF} There are 116 graphs in $\mathcal{FC}_{+}$ up to isomorphism, which are all planar graphs, i.e. $\mathcal{FC}_{+}^{\R P^2}=\emptyset.$
\end{theorem}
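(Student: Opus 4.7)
The plan is to reduce the whole problem to a finite enumeration via the medial graph construction, already developed in the paper. For $G=(V,E,F)\in\mathcal{FC}_+$ on a surface $S$, the key input is Proposition~\ref{prop:listpfm}: the medial graph $G'$ is a $4$-regular tessellation of the same surface $S$ whose combinatorial curvature is everywhere positive, essentially because the face-degrees around a vertex of $G'$ are exactly the four quantities $|x_1|,|x_2|,|f_1|,|f_2|$ in formula~\eqref{eq:rc}. By Theorem~\ref{thm:DeVosMohar2} this forces $G'$ to be finite and $S\in\{\SP^2,\R P^2\}$, and by Lemma~\ref{thm:rp2} the number of vertices of $G'$ is at most $24$. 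Since $|E|=|V(G')|\le24$, the problem reduces to listing all $4$-regular tessellations of $\SP^2$ and $\R P^2$ with everywhere positive combinatorial curvature and at most $24$ vertices, and then inverting the medial construction.

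For the spherical part I would follow the strategy already sketched in the introduction. A $4$-regular tessellation of $\SP^2$ is dual to a quadrangulation of $\SP^2$, and the algorithm of Brinkmann et al.~\cite{MR2186681} enumerates all such quadrangulations up to a given size by an expansion procedure. For each quadrangulation I would dualize to obtain the candidate medial graph $G'$ and discard those failing the local face-degree inequality $\sum_{\sigma\ni x}|\sigma|^{-1}>1$. Each surviving $G'$ is then the medial graph of potentially several tessellations of $\SP^2$: one assigns each face of $G'$ to either a vertex or a face of the original graph $G$, subject to the rule that opposite faces at every vertex of $G'$ are of opposite types. This is a proper $2$-coloring of the face-set of $G'$ that is unique up to a global swap, and the swap corresponds precisely to planar duality. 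Imposing $|x|\ge3$, $|\sigma|\ge3$ and $\Ric>0$ via~\eqref{eq:rc} leaves a finite list, which the enumeration records as exactly $116$ isomorphism classes.

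For the projective case I would show $\mathcal{FC}_+^{\R P^2}=\emptyset$ by lifting to the orientation double cover $\pi\colon\SP^2\to\R P^2$. Any $G\in\mathcal{FC}_+^{\R P^2}$ pulls back to a tessellation $\tilde G$ of $\SP^2$ carrying a free $\mathbb{Z}/2$-action by the deck transformation $\tau$, which acts as a cellular involution without fixed vertex, edge, or face. Since the Forman curvature in~\eqref{eq:deffm} depends only on the local incidence structure, $\tilde G\in\mathcal{FC}_+^{\SP^2}$, so it is one of the $116$ planar tessellations produced above. It then suffices to verify, going through that finite list, that none admits a fixed-point-free combinatorial involution. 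Typical obstructions are a face whose degree is unique in $\tilde G$ (which would have to be $\tau$-invariant and hence carry a fixed point), or a vertex or edge distinguished by a local invariant.

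The main obstacle I expect is not any single theoretical step, all of which are local, but organizing and trusting the exhaustive enumeration on $\SP^2$: one must enumerate quadrangulations by~\cite{MR2186681}, dualize, filter by positive combinatorial curvature, perform the inverse medial reconstruction with its face $2$-coloring, filter again by $\Ric>0$ and by $|x|,|\sigma|\ge 3$, and finally collapse by isomorphism. The count $116$ is a verification rather than a closed-form computation, so the practical difficulty lies in implementing this pipeline carefully enough to be sure nothing is missed and nothing is double-counted.
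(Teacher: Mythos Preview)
Your plan matches the paper's proof almost exactly: enumerate candidate medial graphs as duals of spherical quadrangulations via Brinkmann et al.~\cite{MR2186681}, invert the medial construction through the face $2$-coloring (this is Archdeacon's theorem, Theorem~\ref{thm:arch} in the paper), filter by $\Ric>0$, and handle $\R P^2$ by lifting to the double cover and checking the $116$ spherical graphs for fixed-point-free involutions. One small correction to your writeup: at each $4$-valent vertex $M_e$ of $G'$ the cyclic order of incident faces is vertex--face--vertex--face, so \emph{opposite} faces are of the \emph{same} type and the proper $2$-coloring rule is that \emph{adjacent} faces differ; also, strictly speaking the paper checks not only for a free involution but that the resulting quotient in $\R P^2$ is still a tessellation, though the outcome is the same since no such involution exists.
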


In the last part of the paper, we consider the corner curvature for graphs on surfaces. 
Baues and Peyerimhoff \cite{MR1797301} introduced the so-called corner curvature for graphs on surfaces. A corner of $G=(V,E,F)$ is a pair $(x,\sigma)$ such that $x\in V,$ $\sigma\in F$ and $x$ is incident to $\sigma.$ The corner curvature of a corner $(x,\sigma)$ is defined as
$$C(x,\sigma):=\frac{1}{|x|}+\frac{1}{|\sigma|}-\frac12.$$ Baues and
Peyerimhoff \cite{MR1797301,MR2243299} proved many interesting properties, e.g. a
discrete Cartan-Hadamard Theorem, for planar graphs with non-positive
corner curvature. We denote by $$\CC:=\{G: C(x,\sigma)>0, \forall\
\mathrm{corner}\ (x,\sigma)\}$$ the class of graphs on surfaces with
positive corner curvature. It turns out the corner curvature condition
is quite strong in general, e.g. for any $G\in \CC,$ it has positive
combinatorial curvature, by
$\Phi(x)=\sum_{x\prec\sigma}C(x,\sigma)$. Since the dual operation
switches the vertices and faces of a graph on a surface, one can show
that $G\in \CC$ if and only if $G^*\in \CC;$ see \cite{MR2243299}.  By the
discrete Gauss-Bonnet theorem and some combinatorial arguments, we prove
that the number of vertices of a planar graph with positive corner
curvature, or of its dual graph, is at most 12. Then we modify
Brinkmann-McKay's program \texttt{plantri}~\cite{plantri} to classify
the set of planar graphs with positive corner curvature. It is
well-known for the community that $\CC$ is not a large class. In fact,
Keller classified the class $\CC$ using hand calculation, according to
private communication to him.

\begin{theorem}\label{thm:corner curvature} There are 22 planar graphs and 2 graphs in the projective plane with positive corner curvature up to isomorphism.
\end{theorem}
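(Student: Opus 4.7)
The plan is to combine a local rigidity step, a global Gauss-Bonnet computation that bounds the size of $G$, a \texttt{plantri}-based enumeration on $\SP^2$, and a double-cover reduction for $\R P^2$.

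\textbf{Step 1 (Local structure).} From $C(x,\sigma)=\tfrac{1}{|x|}+\tfrac{1}{|\sigma|}-\tfrac12>0$ together with $|x|,|\sigma|\ge 3$, I would first deduce $|x|,|\sigma|\in\{3,4,5\}$ and, for every corner $(x,\sigma)$, $\min(|x|,|\sigma|)=3$. Hence the only allowed corner types are $(3,3),(3,4),(4,3),(3,5),(5,3)$, with corner curvatures $\tfrac16,\tfrac1{12},\tfrac1{12},\tfrac1{30},\tfrac1{30}$; in particular, a vertex of degree $\ge 4$ is surrounded only by triangles, and a face of size $\ge 4$ is bounded only by degree-$3$ vertices.

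\textbf{Step 2 (Size bound).} Writing $V_i$ (resp. $F_j$) for the number of vertices of degree $i$ (resp. faces of size $j$) and letting $n_{ij}$ be the number of corners of type $(i,j)$, the constraints from Step~1 force $n_{43}=4V_4$, $n_{53}=5V_5$, $n_{34}=4F_4$, $n_{35}=5F_5$. Summing $\sum_{(x,\sigma)}C(x,\sigma)=\chi(\SP^2)=2$, broken up by corner type, gives the identity
\[
3V_3+2V_4+V_5-2F_4-4F_5 = 12,
\]
and its dual
\[
3F_3+2F_4+F_5-2V_4-4V_5 = 12.
\]
Combined with $V=V_3+V_4+V_5$ and $F=F_3+F_4+F_5$ these rewrite as $3V=12+V_4+2V_5+2F_4+4F_5$ and dually for $F$. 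If all faces are triangles ($F_4=F_5=0$), this immediately yields $V=V_3+V_4+V_5\le 3V_3+2V_4+V_5=12$; dually, if all vertices are $3$-valent then $F\le 12$. For the mixed case I would combine the non-negativity constraints $n_{33}=3V_3-4F_4-5F_5\ge 0$ and $n_{33}=3F_3-4V_4-5V_5\ge 0$ with the local structural constraint from Step~1 (degree-$\ge 4$ vertices are not incident to size-$\ge 4$ faces, so large vertices and large faces are combinatorially separated); a finite case check on $(V_4,V_5,F_4,F_5)$ satisfying the resulting Diophantine system rules out every parameter vector with both $V\ge 13$ and $F\ge 13$. The conclusion is $\min(|V|,|V(G^*)|)\le 12$.

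\textbf{Step 3 (Enumeration on $\SP^2$).} Since $\min(|V|,|V(G^*)|)\le 12$ and the class $\CC$ is closed under the dual operation, it suffices to enumerate all planar tessellations with at most $12$ vertices satisfying the local corner-type restriction from Step~1, and then include their duals. I would modify Brinkmann-McKay's \texttt{plantri} so that in its incremental generation it prunes any partial plane graph as soon as a corner of forbidden type $(i,j)\notin\{(3,3),(3,4),(4,3),(3,5),(5,3)\}$ appears. Running the modified program, taking duals, and removing isomorphic duplicates yields the asserted $22$ isomorphism classes.

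\textbf{Step 4 (The projective plane).} Any $G\in\CC$ on $\R P^2$ has an orientation double cover $\widetilde G$ on $\SP^2$; since corner curvature depends only on the local counts $|x|,|\sigma|$, which are preserved by the covering, $\widetilde G\in\CC^{\SP^2}$. Moreover the deck involution is a fixed-point-free combinatorial automorphism of $\widetilde G$, and conversely every fixed-point-free combinatorial involution on a graph in $\CC^{\SP^2}$ descends to a graph in $\CC^{\R P^2}$. Hence classifying $\CC^{\R P^2}$ reduces to scanning the $22$ graphs obtained in Step~3 for fixed-point-free $\Z_2$-symmetries; this scan produces exactly $2$ quotients.

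The hard part is the mixed case of Step~2: producing a clean combinatorial contradiction from $V,F\ge 13$ requires balancing the two Gauss-Bonnet-type equalities against the non-negativity of $n_{33}$ and the separation of large vertices from large faces. Step~3 is then a routine (if implementation-heavy) use of \texttt{plantri}, and Step~4 reduces to an automorphism check on a short finite list.
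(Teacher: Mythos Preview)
Your outline matches the paper's strategy closely: local corner-type restriction (the paper's Lemma~\ref{lem:pcnc}), a Gauss--Bonnet/Diophantine bound giving $\min(\#V,\#F)\le 12$ (the paper's Theorem~\ref{assert:12}), a \texttt{plantri} enumeration, and the double-cover reduction for $\R P^2$. Two points of comparison are worth noting. First, the paper inserts an intermediate step you skip: it passes to the medial graph $G'$, observes that the vertex patterns of $G'$ are $(3,3,k,l)$ with $k,l\in\{3,4,5\}$, and uses the combinatorial-curvature bound to get $\#V'\le 30$, hence $\#V\le 16$; only then does it run the Diophantine analysis on the narrower range $13\le \#V\le 16$. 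Your direct route (finiteness from $n_{33}\ge 0$ alone) is arguably cleaner and reaches the same finite case-check.

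Second---and this is where your Step~2 has a genuine gap as written---the Diophantine constraints you list do \emph{not} by themselves rule out all tuples with $V,F\ge 13$. For instance $(V_4,V_5,F_4,F_5)=(0,6,0,6)$ gives $V=F=16$, $V_3=F_3=10$, and $n_{33}=0$, so it survives every inequality you wrote down. What kills such cases is a connectivity argument: in that example every pentagon is bounded by degree-$3$ vertices and every triangle by degree-$5$ vertices, so no edge can join a degree-$3$ vertex to a degree-$5$ vertex, contradicting connectedness of the tessellation. The paper makes this precise as its Fact~\ref{fact:pcnc}: if $v_5>0$ and $f_5>0$ then a shortest path from a $5$-valent vertex to a pentagon forces $n_{335}>0$, and a secondary parity-type statement about $n_{335},n_{345},n_{355}$. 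The paper then tracks the finer vertex-pattern counts $n_{333},\ldots,n_{555}$ (not just $(V_4,V_5,F_4,F_5)$) in its computer check, finds $15$ surviving combinations, and rejects each via Fact~\ref{fact:pcnc}. Your ``separation'' remark is the right intuition, but you will need to promote it to an explicit connectivity lemma along these lines before the case-check closes. Finally, in Step~4, ``fixed-point-free'' should mean free on cells (vertices, edges, \emph{and} faces), and you should note that any such involution of a spherical tessellation is necessarily orientation-reversing, so the quotient is indeed $\R P^2$; the paper further checks that the quotient is a tessellation.
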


The programs are available via
\url{https://github.com/akmyh2/PFCPCnC/}.
The rest of the paper is organized as follows: In the next section, we review
embedding of graphs to the plane. In Section~\ref{sec:forman} and Section~\ref{sec:PCnC}, we explain
how to enumerate all the $\mathcal{FC}_{+}$- and $\PCnC$-graphs.

\bigskip
{\bf Acknowledgements.}
We cordially thank Gunnar Brinkmann, Beifang Chen, Nico van Cleemput, Matthias Keller,  Shiping Liu, Brendan McKay, Min Yan, Florentin M\"unch, and Norbert Peyerimhoff for many helpful discussions on curvatures on
planar graphs.  A. is supported by JSPS KAKENHI Grant Number
JP16K05247. The work was done when the first author was visiting School
of Mathematical Sciences, and Shanghai Center for Mathematical Sciences,
Fudan University. He thanks for the hospitality of these
institutes. H. is supported by NSFC, no.11831004 and no. 11926313. S. is supported by NSFC grant no. 11771083 and NSF of Fuzhou University through grant GXRC-18035.

\section{Preliminaries}\label{sec:pre}
Let $G=(V,E,F)$ be a graph on a surface $S.$ Usually, we do not distinguish $V,E,F$ with their embedding image in $S.$
 We recall basic results in graph theory.

\begin{prop}[\cite{MR2243299}]\label{prop:dualte} For a finite graph $G,$ it is a tessellation if and only if so is $G^*.$ 
\end{prop}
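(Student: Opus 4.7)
The proposition is an iff statement, and duality is involutive (in the cellular sense, $G^{**}\cong G$ for tessellations), so I would reduce to proving one direction: if $G=(V,E,F)$ is a finite tessellation of a compact surface $S$, then the dual $G^*$ is also a tessellation of $S$. Since $G$ is finite, the local-finiteness clause (i) is automatic for both, and I would embed $G^*$ in $S$ in the standard geometric way: choose an interior point $v_\sigma\in\sigma$ for every face $\sigma\in F$ and use these as the vertices of $G^*$; for each edge $e\in E$ bounded by faces $\sigma_1\ne\sigma_2$ (which are distinct by (iii) for $G$), draw a simple arc $e^*$ from $v_{\sigma_1}$ to $v_{\sigma_2}$ passing through the interior of $e$ exactly once and otherwise lying in $\sigma_1\cup\sigma_2$. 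The fact that $\overline{\sigma_i}$ is a closed disk (condition (ii) for $G$) lets us draw these arcs disjointly inside each closed face, so the result is an embedded 1-complex, and its complementary regions are in bijection with $V$: each vertex $x\in V$ sits in a unique complementary region $\widehat{x}$.

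The next step is to verify conditions (ii) and (iii) of the tessellation definition for $G^*$. For (iii), each dual edge $e^*$ lies on the boundary of exactly the two regions $\widehat{x_1},\widehat{x_2}$ where $x_1,x_2$ are the two endpoints of $e$ in $G$; these are distinct because $G$ is simple. For (ii), I would use the standard cyclic-rotation argument: around a vertex $x\in V$, the edges and faces of $G$ incident to $x$ occur in a cyclic order $e_1,\sigma_1,e_2,\sigma_2,\ldots,e_k,\sigma_k,e_1$ (where $k=|x|$), because each edge $e_i$ incident to $x$ is shared by exactly two faces incident to $x$ (using (iii) for $G$ and the disk condition (ii) for $G$). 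Reading off the dual picture, the boundary of $\widehat{x}$ traverses the arcs $e_1^*,e_2^*,\ldots,e_k^*$ meeting consecutively at the dual vertices $v_{\sigma_1},\ldots,v_{\sigma_k}$, forming a simple closed polygon; hence $\overline{\widehat{x}}$ is a closed disk bounded by finitely many edges of $G^*$.

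The main obstacle is condition (iv) for $G^*$: if two closed dual faces $\overline{\widehat{x_1}}$ and $\overline{\widehat{x_2}}$ meet, I must show their intersection is either a single dual vertex or a single closed dual edge. The key lemma I would establish, using (iv) for $G$, is a \emph{uniqueness} statement: two distinct vertices $x_1,x_2\in V$ cannot share two distinct incident faces unless they are joined by an edge of $G$. Indeed, if $\sigma_1\ne\sigma_2$ are both incident to $x_1$ and to $x_2$, then $\{x_1,x_2\}\subset\overline{\sigma_1}\cap\overline{\sigma_2}$; by (iv) for $G$, this intersection is a vertex (impossible as it contains two) or the closure of an edge, forcing $\{x_1,x_2\}$ to be the endpoints of an edge. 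Feeding this back through the dual picture: the incident dual vertices common to $\widehat{x_1}$ and $\widehat{x_2}$ correspond to faces of $G$ incident to both $x_1$ and $x_2$, and the incident dual edges common to the two closed faces correspond to edges of $G$ joining $x_1$ to $x_2$. Combining the uniqueness lemma with the cyclic boundary description from the previous step yields the desired trichotomy (empty, a single $v_\sigma$, or a single $e^*$).

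Once (i)--(iv) are verified for $G^*$, the forward direction is complete; applying the same argument to $G^*$ and using $G^{**}\cong G$ gives the converse. The plan therefore hinges on one genuinely combinatorial fact (the uniqueness lemma above) together with careful bookkeeping of the cyclic order of edges/faces around each vertex, both of which are standard consequences of the axioms (ii)--(iv) for $G$.
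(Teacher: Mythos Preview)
The paper gives no proof of this proposition; it is simply quoted from Baues--Peyerimhoff~\cite{MR2243299}. Your proposal is therefore a self-contained direct verification rather than a reconstruction of anything in the present paper, and it follows the standard route and is correct in outline: embed $G^*$ geometrically, read off (ii) and (iii) for $G^*$ from the cyclic order of edges and faces around each vertex of $G$, and deduce (iv) for $G^*$ from (iv) for $G$.

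One point deserves sharpening. Your ``uniqueness lemma'' as stated only handles the non-adjacent case $x_1\not\sim x_2$. To complete the verification of condition~(iv) for $G^*$ when $x_1\sim x_2$, you should also argue that the faces of $G$ incident to \emph{both} $x_1$ and $x_2$ are exactly the two faces containing the (unique, by simplicity) edge $e=\{x_1,x_2\}$: indeed, if $\sigma_1\ne\sigma_2$ are both incident to $x_1$ and $x_2$ then by (iv) for $G$ one has $\overline{\sigma_1}\cap\overline{\sigma_2}=\overline{e}$, and by (iii) for $G$ at most two closed faces contain $\overline{e}$. It then follows that $\overline{\widehat{x_1}}\cap\overline{\widehat{x_2}}$ is precisely the closed dual edge $\overline{e^*}$, completing the trichotomy you claim.
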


A graph (embedded or not) is said to be \emph{$k$-connected} if it cannot be
disconnected by removing fewer than $k$ vertices.

\begin{prop}\label{prop:tess2conn} Any finite planar tessellation $G=(V,E,F)$ is $2$-connected.
\end{prop}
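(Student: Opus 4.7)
\textbf{Plan for Proposition~\ref{prop:tess2conn}.}
The strategy hinges on one consequence of axiom (ii): since every closed face $\overline{\sigma}$ is homeomorphic to a closed disk, its boundary in $S$ is an embedded circle, so the boundary walk is a \emph{simple} cycle of $G$. Finiteness also forces $S = \SP^2$, since in $\R^2$ any finite graph would leave an unbounded face whose closure is non-compact, violating (ii). I split the proof into (a) $G$ is connected and (b) $G$ has no cut vertex.

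For (a), label each face $\sigma$ by the component of $G$ containing the (connected) cycle $\partial\sigma$; faces sharing an edge receive the same label. Let $A$ be the set of faces reachable from a fixed $\sigma_0$ by repeated steps between edge-adjacent faces. If $A \neq F$, then the closed sets $X = \bigcup_{\sigma \in A}\overline\sigma$ and $Y = \bigcup_{\tau \notin A}\overline\tau$ cover $\SP^2$ by axiom (i), so by connectedness of $\SP^2$ they meet at some point $p \in \overline\sigma \cap \overline\tau$ with $\sigma \in A$ and $\tau \notin A$. Axiom (iv) says this intersection is either a closed edge (immediately contradicting $\tau \notin A$, since then $\sigma$ and $\tau$ are edge-adjacent) or a single vertex $v$; in the latter case I propagate reachability through the cyclic sequence of faces incident to $v$, whose consecutive members share an edge at $v$, again forcing $\tau \in A$. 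Thus the labels agree globally, and because every edge lies on some face boundary by axiom (iii), $G$ is connected.

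For (b), fix $v \in V$ and list its $k = |v| \ge 3$ incident edges $e_1,\ldots,e_k$ cyclically around $v$, with $v_i$ the other endpoint of $e_i$. Between consecutive edges $e_i,e_{i+1}$ lies an incident face $\sigma_i$; since $\partial\sigma_i$ is a simple cycle through $v$ using precisely the two edges $e_i$ and $e_{i+1}$ at $v$, deleting $v$ from this cycle leaves a path from $v_i$ to $v_{i+1}$ in $G-v$. Chaining around the cyclic list places all $v_i$ in one component of $G-v$. For any other $u \in V$, a shortest $u$-to-$v$ path in the connected graph $G$ avoids $v$ except at its final step, so its penultimate vertex is some $v_i$, and $u$ lies in the same component of $G-v$ as $v_i$. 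Hence $G-v$ is connected and $v$ is not a cut vertex, completing the proof that $G$ is 2-connected.

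The most delicate step is the vertex-meeting case in (a): two face closures may intersect only at a single vertex, and one must invoke the local planar cyclic structure of faces around that vertex---rather than the combinatorial axioms alone---to conclude that reachability still propagates. The remaining arguments are direct consequences of the simple-cycle property of face boundaries together with elementary path manipulations in the connected graph $G$.
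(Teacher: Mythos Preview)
Your proof is correct and follows the same high-level plan as the paper---show that $G-v$ is connected for every vertex $v$---but the execution differs. The paper argues topologically: it asserts that the union of closed faces incident to $v$ is a closed disk, takes its boundary curve $\gamma$, invokes the Jordan curve theorem, and then claims that the topological connectedness of the complementary region $D_2\cup\gamma$ forces graph-connectedness of $G-v$. Your argument is purely combinatorial: you use only that each individual face boundary is a simple cycle (axiom (ii)) to produce explicit $v_i$--$v_{i+1}$ paths in $G-v$, and then a shortest-path argument to reach every other vertex. You also supply a separate proof that $G$ is connected, which the paper does not isolate (it is implicit in their conclusion but the step from topological to graph connectedness is not spelled out there either).

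What each approach buys: the paper's version is shorter on the page but leans on several topological facts (that the star of $v$ is a disk, Jordan, and the unjustified passage from topological to combinatorial connectedness). Your version is longer but self-contained---it never needs the Jordan curve theorem or the disk structure of the star, only the local rotation structure at a vertex and the simple-cycle property of face boundaries. The ``delicate step'' you flag (propagating face-reachability through a vertex where two face closures meet in a single point) is exactly the place where the local cyclic structure of the embedding is essential, and you handle it correctly; the same structure is what makes the paper's claim that $\bigcup_i\overline{\sigma_i}$ is a disk true.
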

\begin{proof} For any $v\in V,$ we want to show that $G$ is connected if
 we remove the vertex $v$ and its incident edges. Let
 $\{\sigma_i\}_{i=1}^N$ be the set of faces incident to $v.$ Then by the
 tessellation properties, $\cup_{i=1}^N\overline{\sigma_i}$ is
 homeomorphic to a closed disk, and its boundary $\gamma$ consists of
 edges, which is a simple closed curve. By the Jordan curve theorem,
 $\SP^2\setminus \gamma$ consists of two disjoint open disks $D_1$ and
 $D_2$. Without loss of generality, we assume that
 $\cup_{i=1}^N\overline{\sigma_i}\subset D_1.$ Then $D_2\cup \gamma$ is
 connected, which implies the connectedness of the graph $G$ by removing
 $v$ and its incident edges.
\end{proof}


For our purposes, we need the notion of the medial graph of $G,$
introduced by Steinitz \cite{Steinitz22}, see e.g. \cite[p.~104]{MR1271140}.
The \emph{medial graph} $G'=(V',E',F')$
 of $G=(V,E,F)$ is defined as follows: For each $e\in E$, choose an interior point (say midpoint) $M_e$ on the
edge $e$. Let $V'$ be the set of $M_e$ for all $e\in E$.  The vertices $M_{e_1}$ and $M_{e_2}$ are adjacent if there exist $x\in V, f\in F$ such that $x\prec e_1, x\prec e_2, e_1\prec f, e_2\prec f.$  For any $f\in F,$ draw a curve $c_{e_1,e_2}$ on $f$
between any adjacent $M_{e_1}$ and $M_{e_2}$ on the boundary of $f,$ such that $c_{e_1,e_2}$ does
not cross each other.  Let
$E'$ be the set of such curves, $F'$ be the set of connected components
of $S\setminus (V' \cup E')$.  As in Figure~\ref{fig:4reg}, each
 $f'\in F'$ corresponds to either  $f\in F$ or $x\in V$, and
$G'$ is 4-regular. 
For example, the
medial graph of tetrahedron is the graph of the octahedron.  The
medial graph of octahedron or cube is the graph of the
cuboctahedron~(Figure~\ref{fig:co}), where the cuboctahedron is an
Archimedean solid. More generally, the medial graph of the graph of a
Platonic solid $\mathbf{P}$ is the graph of a daughter
polyhedron~(Conway, Burgiel, and Goodman-Strauss~\cite[Sect.~21~``Naming
Archimedean and Catalan Polyhedra and Tilings'', p.~285]{MR2410150}) of
$\mathbf{P}$ and the dual $\mathbf{Q}$.
Medial graphs appear tacitly in \cite{Woess98}.
A tessellation $G=(V,E,F)$ satisfies a strong isoperimetric inequality, if the
average of combinatorial curvature 
	\begin{align*}
\Psi(e)=	 \sum_{\mbox{\scriptsize $\begin{array}{c} x\prec e\\ x\in V\end{array}$}}\frac{1}{|x|} + 
	\sum_{\mbox{\scriptsize $\begin{array}{c} e\prec f\\ f\in F\end{array}$}}\frac{1}{|f|} - 1 \qquad (e\in E)
	\end{align*}
of medial graph $A'$ of finite induced subgraph
$A$ of $G$ has negative supremum as $A$ grows toward $G$.

The same argument as in Proposition~2.1 in \cite{AHSW19} yields the following result.
\begin{prop}[\cite{AHSW19}]\label{prop:medtess}If $G$ is a tessellation, then so is the medial graph $G'.$ 
\end{prop}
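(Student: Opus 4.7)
The plan is to verify the four tessellation axioms for $G'$ directly from the explicit description of its cells given just before the statement. The key structural observation is that the faces of $G'$ are in bijection with $V\sqcup F$: each $x\in V$ produces a face $f'_x\ni x$ of $G'$, and each $f\in F$ produces a face $f'_f\subset f$. First I would record the boundary of each type: $\partial\overline{f'_x}$ is the cyclic concatenation of the $|x|$ medial arcs $c_{e_i,e_{i+1}}$, one drawn inside each face $\sigma_i$ incident to $x$, taken in cyclic order around $x$; $\partial\overline{f'_f}$ is the cyclic concatenation of $|f|$ medial arcs obtained by truncating each corner of $f$.

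With these descriptions at hand, axiom (i) follows immediately from the local finiteness of $G$, since each closed face of $G$ meets only boundedly many closed faces of $G'$: namely $\overline{f'_f}$ together with the $|f|$ faces $\overline{f'_x}$ for $x\prec f$. Axiom (iii) is also transparent: every edge $e'=c_{e_1,e_2}\in E'$ lies in the interior of a unique $f\in F$ with endpoints at the midpoints of two edges sharing a vertex $x\prec f$, so its two sides inside $\overline f$ lie in $\overline{f'_f}$ and in $\overline{f'_x}$, respectively. For axiom (iv) I would split the analysis into three cases according to the two labels: two vertex-faces $(f'_x,f'_y)$ can meet only if $x\sim y$, and then only at $M_{xy}$; two face-faces $(f'_f,f'_g)$ can meet only if $f,g$ share an edge $e$ (using tessellation axiom (iv) for $G$), and then only at $M_e$; and a mixed pair $(f'_x,f'_f)$ with $x\prec f$ shares exactly the single medial arc drawn inside $f$ at the corner $x$.

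The hard part will be axiom (ii), showing that each closed face of $G'$ is homeomorphic to a closed disk. For $\overline{f'_f}$ this is routine, since $\overline f$ is already a closed disk by axiom (ii) for $G$ and $f'_f$ is obtained from it by truncating its corners along the medial arcs. For $\overline{f'_x}$ I would argue that the closed star $\mathrm{St}(x)=\bigcup_{x\prec\sigma}\overline\sigma$ is a closed topological disk bounded by a simple closed curve, using the tessellation axioms for $G$ together with the Jordan-curve argument already employed in the proof of Proposition~\ref{prop:tess2conn}; truncating a small neighborhood of $x$ along the arcs $c_{e_i,e_{i+1}}$ then produces a closed sub-disk bounded by a simple polygonal loop of $|x|$ medial edges, which is exactly $\overline{f'_x}$. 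This essentially recovers the argument of Proposition~2.1 in \cite{AHSW19}.
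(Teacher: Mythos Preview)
Your proposal is correct and is exactly the kind of direct axiom-by-axiom verification one expects here; the paper itself does not give a proof but simply cites \cite{AHSW19}, and you even note that your argument essentially recovers Proposition~2.1 there. One cosmetic point: in your treatment of axiom (ii) for $\overline{f'_x}$ the phrase ``truncating a small neighborhood of $x$'' is slightly off---what you really use is that the cycle of medial arcs around $x$ is a simple closed curve inside the closed-star disk $\mathrm{St}(x)$, so the Jordan--Schoenflies theorem gives that the component containing $x$ is a closed disk; stating it that way would make the step cleaner.
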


It is obvious that a finite graph $G$ on surface and its dual graph has same medial graphs, i.e. $G'=(G^*)'.$ Given a 4-regular graph on a surface, we want to figure out whether it is a medial graph of some graph on a surface. The following theorem is very useful, and there is a canonical way to construct the ``inverse'' medial graph. 
\begin{theorem}[\protect{\cite[Theorem~2.1]{archdeacon92}}]\label{thm:arch} Any embedded $4$-regular graph whose faces can be $2$-colored is the medial graph of a unique dual pair of embedding graphs.
\end{theorem}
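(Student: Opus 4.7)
The plan is to construct the dual pair directly from a proper $2$-coloring of the faces of $H$, show that the medial of either graph recovers $H$, and then derive uniqueness from the fact that a $2$-coloring of the face set of a connected embedded graph is unique up to swapping the two color classes.

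First, fix a proper $2$-coloring of the faces of $H$, say by colors ``black'' and ``white''; by hypothesis such a coloring exists. At any vertex $v$ of $H$, exactly four faces meet, and consecutive faces in the rotation around $v$ share an edge and therefore receive different colors. Hence the four incident faces alternate black/white/black/white, and the two black faces at $v$ are opposite, as are the two white faces. Define an embedded graph $G$ as follows: place a vertex of $G$ in the interior of each black face of $H$, and for every $v \in V(H)$ draw one edge of $G$ through $v$ joining the two opposite black face centers, taken inside the union of those two black faces. Define $G^\ast$ analogously from the white faces.

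Next, I would verify the construction. Distinct edges of $G$ do not cross: each edge lies inside the ``bowtie'' formed by two black faces meeting at a single vertex $v$ of $H$, and two such bowties meet only at vertices of $H$, each of which carries exactly one edge of $G$. Thus $G$ is a well-defined embedded graph in $S$. By the alternation around each $v$, the cellular complex of $V(G) \cup V(G^\ast) \cup E(G) \cup E(G^\ast)$ partitions $S$ into regions each of which contains exactly one vertex of $H$, from which one checks that the faces of $G$ are in bijection with the white faces of $H$, i.e.\ with the vertices of $G^\ast$, and that $G^\ast$ is indeed dual to $G$. To recover $H$ as the medial of $G$, note that each edge of $G$ contains exactly one vertex of $H$ in its interior, which we may take as its midpoint, giving a bijection $V(G') \leftrightarrow V(H)$; two such midpoints are adjacent in $G'$ iff the corresponding edges of $G$ share a vertex of $G$ (a black face of $H$) and a face of $G$ (a white face of $H$), and this is precisely the medial adjacency condition realised by an edge of $H$ bordering one black and one white face, which is every edge of $H$. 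Hence $G' \cong H$ as embedded graphs, and $(G^\ast)' = G'$ as well.

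For uniqueness, suppose $H$ is the medial graph of some embedded pair $\widetilde G, \widetilde G^\ast$. By the discussion in Section~\ref{sec:pre}, the faces of $H$ are naturally partitioned into ``vertex-faces'' (corresponding to $V(\widetilde G)$) and ``face-faces'' (corresponding to $F(\widetilde G) = V(\widetilde G^\ast)$), and every edge of $H$ separates a vertex-face from a face-face. This partition is a proper $2$-coloring of the faces of $H$. Since $H$ is connected, any proper $2$-coloring of its faces is determined by the color of any single face and hence is unique up to interchanging the two classes; the two choices correspond exactly to the pair $\{\widetilde G, \widetilde G^\ast\}$, and the constructive recipe above shows they yield precisely the pair $\{G, G^\ast\}$ built from either coloring.

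The main obstacle is the middle step: proving that the curves drawn through the vertices of $H$ genuinely assemble into an embedded cellular complex whose face structure is combinatorially dual in the sense required, particularly when $S$ is not simply connected. This reduces to a careful local analysis at each vertex of $H$ using the alternation of colors, together with a global argument from the connectedness of $H$ to propagate the local picture to a consistent global embedding.
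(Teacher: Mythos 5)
Your proposal is correct and is essentially the same argument the paper relies on: the paper does not prove this statement (it is quoted from Archdeacon) but immediately afterwards describes exactly your inverse-medial construction, phrased via the index-two equivalence relation generated by ``right across from each other at a vertex'' rather than via a proper $2$-coloring of the faces --- these give the same partition, since opposite faces at a $4$-valent vertex necessarily receive the same color while edge-adjacent ones differ. Your uniqueness argument (connectedness of $H$ forces the face $2$-coloring to be unique up to swapping the classes, and the two classes yield the two members of the dual pair) is the standard one, and the step you flag as remaining --- that the curves drawn through the vertices of $H$ assemble into a cellular embedding whose faces biject with the white faces --- is precisely the content delegated to the cited reference.
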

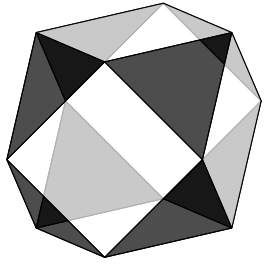
\begin{figure}
	 \centering
 \begin{tikzpicture}[line join=bevel,z=4.5,scale=2]
	     \begin{scope}[scale=1.3]
	 \coordinate (C1) at (0.15,0.15);
	 \coordinate (C2) at (0,   0.5);
	 \coordinate (C3) at (0.15,1.15);
	 \coordinate (C4) at (0.3, 0.8);

	 \coordinate (D1) at (0.8,0.3);
	 \coordinate (D2) at (1.15,0.15);
	 \coordinate (D3) at (0.5, 0);
	 
	 \coordinate (E1) at (0.8,1.3);
	 \coordinate (E2) at (1.3, 0.8);

	 \coordinate (F1) at (0.5,1);
	 \coordinate (F2) at (1, 0.5);

	  \coordinate (G1) at (1.15,1.15);

	 \draw[fill opacity=0.7, fill= white] (C1) -- (C2) -- (C3) -- (C4) -- cycle;
	 \draw[fill opacity=0.7, fill= white] (C1) -- (D1) -- (D2) -- (D3) -- cycle;
 	 \draw[fill opacity=0.7, fill= white] (C4) -- (E1) -- (E2) --
	  (D1) -- cycle;
	  \draw[fill opacity=0.7, fill=black] (C1) -- (C4) -- (D1)
	  --cycle;
	  \draw[fill opacity=0.7, fill=black] (D1) -- (E2) -- (D2)
	  --cycle;
	  \draw[fill opacity=0.7, fill=black] (E1) -- (G1) -- (E2)
	  --cycle;
	  \draw[fill opacity=0.7, fill=black] (C3) -- (C4) -- (E1) --cycle;
  	 \draw[fill opacity=0.7, fill= white] (C2) -- (F1) -- (F2) -- (D3) -- cycle;
   	 \draw[fill opacity=0.7, fill= white] (C3) -- (E1) -- (G1) -- (F1) -- cycle;
   	 \draw[fill opacity=0.7, fill= white] (D2) -- (E2) -- (G1) -- (F2) -- cycle;	 
	  \draw[fill opacity=0.7, fill=black] (C2) -- (C3) -- (F1)
	  --cycle;
	  \draw[fill opacity=0.7, fill=black] (F1) -- (F2) -- (G1) --cycle;
	  \draw[fill opacity=0.7, fill=black] (D2) -- (D3) -- (F2) --cycle;
	  \draw[fill opacity=0.7, fill=black] (C1) -- (C2) -- (D3)
	  --cycle;
	     \end{scope}
 \end{tikzpicture}
 \caption{The cuboctahedron. The graph $G'=(V',E',F')$ of cuboctahedron
 is the medial graph of the graph of cube $G=(V,E,F)$.
 $V'$ corresponds to $E$.
 The 6 white square faces of $G'$ correspond to $F$, and the other 8
 triangular faces of $G'$ to $V$. \label{fig:co}}
\end{figure}
For a 4-regular tessellation
 $H=(V',E',F')$ of $\SP^2$, the unique dual pair $(G, G^*)$ of $H$  is computed as follows:
For $f'_1,f'_2\in F'$,  we write $f'_1\sim f'_2$, if
$f'_1$ is right across from $f'_2$ with respect to some 4-valent 
  $v'\in V'$.
Then the equivalence relation $\approx\ \subseteq\ F'\times F'$
generated from $\sim$ has index two. For example, when $H$ is the graph
of cuboctahedron~(Figure~\ref{fig:co}), the equivalence classes of
$\approx$ are the set $A_1$ of eight black triangular faces and the set
$A_2$ of six white square faces. Let $\{A_1,A_2\}$ be $F'/\approx$.
We associate $G_i=(V,E,F)$ as
follows: Choose an equivalence class $A_i$, and an inner
point $P_{f'}$ for each $f'\in A_i$. Let $V$ be the set of $P_{f'}$'s. If
a face $f'_1$ of $A_i$ is right across from a face $f'_2$ of $A_i$
with respect to a 4-regular vertex $v'\in \overline{f'_1}\cap\overline{f'_2}$
of $V'$, 
i.e. $f'_1\sim f'_2$, then we draw exactly one simple curve $a_{f'_1,f'_2}$ from
$P_{f'_1}$ to $P_{f'_2}$ through $v'$. Let $E$ be the set of such
curves. Let $F$ be the set of connected components of $\SP^2\setminus
(V\cup E)$. 

If  $H$ is cuboctahedron, $G_1$~(resp. $G_2$) is a cube~(resp. regular octahedron) when $V=A_1$~(resp. $A_2$).

For a tessellation, we give a new formula for the Forman curvature.
\begin{prop}
Let $G=(V,E,F)$ be a tessellation on a surface $S.$ For any $e\in E,$ let $x_1,x_2\in V$ and $f_1,f_2\in F$ satisfy $x_1\prec e,x_2\prec e, e\prec f_1, e\prec f_2.$ Then
\begin{align}
\Ric(e)=16-(|x_1|+|x_2|+|f_1|+|f_2|).\label{prop:FC1}
\end{align}
\end{prop}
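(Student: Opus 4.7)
My plan is to unwind the definition \eqref{eq:deffm} using the tessellation axioms, ultimately translating the count of parallel neighbors into an inclusion--exclusion over edges at the four corners of $e$. The very first step is to note that in a tessellation each edge lies in exactly two faces and has exactly two endpoints, so the first two terms of \eqref{eq:deffm} contribute $2+2=4$. Thus it suffices to prove
\[
\#\{\text{parallel neighbors of } e\} \;=\; |x_1|+|x_2|+|f_1|+|f_2|-12.
\]

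To count parallel neighbors, I would introduce
\[
A=\{e'\in E\setminus\{e\}: \exists\,x\in V\text{ with }x\prec e,\ x\prec e'\},\qquad
B=\{e'\in E\setminus\{e\}: \exists\,f\in F\text{ with }e\prec f,\ e'\prec f\}.
\]
Because the definition of parallel neighbor requires \emph{exactly one} of the two incidence conditions, the set of parallel neighbors is the symmetric difference $A\triangle B$, whose size is $|A|+|B|-2|A\cap B|$.

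Next I would compute each of $|A|,|B|,|A\cap B|$. For $|A|$: there are $|x_i|-1$ edges at $x_i$ other than $e$ for $i=1,2$; since $G$ is simple there is no second edge joining $x_1$ and $x_2$, so these two families are disjoint and $|A|=|x_1|+|x_2|-2$. For $|B|$: there are $|f_j|-1$ edges on $\partial f_j$ other than $e$ for $j=1,2$; by tessellation property (iv), $\overline{f_1}\cap\overline{f_2}$ is either a single vertex or the single closed edge $\overline e$, so these families also overlap only in $e$, and $|B|=|f_1|+|f_2|-2$.

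The only delicate step is checking $|A\cap B|=4$. For each ``corner'' $(x_i,f_j)$ with $i,j\in\{1,2\}$, the two edges of $\partial f_j$ meeting at $x_i$ consist of $e$ and exactly one other edge $e_{ij}$, which lies in both $A$ and $B$. I would then argue that the four candidates $e_{11},e_{12},e_{21},e_{22}$ are pairwise distinct: an equality $e_{ij}=e_{i'j'}$ with $i\ne i'$ would produce a second edge between $x_1$ and $x_2$ (ruled out by simplicity), while $i=i'$, $j\ne j'$ would produce a second edge on $\overline{f_1}\cap\overline{f_2}$ (ruled out by property (iv)). Conversely any element of $A\cap B$ is incident to some $x_i$ and lies on some $\partial f_j$, so it is one of the $e_{ij}$. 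Substituting $|A\cap B|=4$ gives $\#\{\text{parallel neighbors of }e\}=|x_1|+|x_2|+|f_1|+|f_2|-12$, and combining with the initial $4$ yields \eqref{prop:FC1}. The only real obstacle is the corner-disjointness claim, which is where the tessellation axiom (iv) is genuinely used.
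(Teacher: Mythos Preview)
Your argument is correct and follows the same route as the paper's own proof: both reduce to showing that the parallel neighbors of $e$ number $|x_1|+|x_2|+|f_1|+|f_2|-12$ by identifying the four ``corner'' edges $e_{ij}$ as precisely the edges excluded by the exclusive-or in the definition. Your treatment via the symmetric difference $A\triangle B$ and the explicit verification that the four $e_{ij}$ are pairwise distinct (using simplicity and tessellation axiom~(iv)) is somewhat more detailed than the paper's version, which simply asserts the count after listing the corner edges, but the underlying idea is identical.
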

\begin{proof} {By the tessellation properties, the closed faces $\overline{f_1}$ and $\overline{f_2}$ intersect only at the closure of the edge $e.$} Let $\{e_1,e_2,\tilde{e_1},\tilde{e_2}\}\subset E\setminus\{e\}$ be the set of edges such that 
$$x_1\prec e_i\prec f_i, x_2\prec \tilde{e_i}\prec f_i,\ i=1,2.$$
Then any parallel neighbor $\tilde{e}$ of $e$ satisfies $\tilde{e}\not\in\{e,e_1,e_2,\tilde{e_1},\tilde{e_2}\}$ and $$x_1\prec \tilde{e},\ \mathrm{or}\ x_2\prec \tilde{e},\ \mathrm{or}\ \tilde{e}\prec f_1,\ \mathrm{or}\ \tilde{e}\prec f_2.$$
This yields that $$\sharp\{\mathrm{parallel\ neighbors\ of }\ e\}=|x_1|+|x_2|+|f_1|+|f_2|-12.$$
Hence by 
\eqref{eq:deffm}, we get 
$$\Ric(e)=2+2-(|x_1|+|x_2|+|f_1|+|f_2|-12).$$ This proves the proposition.
\end{proof}

For any vertex $x$ of degree $N,$ we denote by $$(|\sigma_1|,|\sigma_2|,\cdots,|\sigma_{N}|)$$ the pattern of $x,$ where $\{\sigma_i\}_{i=1}^{N}$ are the faces incident to $x$, ordered by $|\sigma_1|\leq|\sigma_2|\leq\cdots\leq|\sigma_{N}|.$ 

In the next proposition, we prove that for a graph $G$ on a surface with positive Forman curvature, the medial graph $G'$ has positive combinatorial curvature everywhere.
\begin{prop}\label{prop:listpfm} Let $G$ be a graph on surface, and $e$ be an edge with $\Ric(e)>0.$
Then the list of vertex patterns for $M_e$ in the medial graph $G'$ is given by
\begin{align}
 (3,3,3,3), (3,3,3,4), (3,3,3,5), (3,3,3,6), (3,3,4,4),  (3,3,4,5),
 (3,4,4,4).\label{vp}
\end{align} In particular, $\Phi(M_e)>0.$ Hence, for any $G\in \fmp,$ $G'$ has positive combinatorial curvature.
\end{prop}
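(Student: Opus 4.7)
The plan is to reduce the claim to a short enumeration followed by a case-by-case rational check. The first step is to identify the pattern of $M_e$ explicitly. By construction of the medial graph and Proposition~\ref{prop:medtess}, $M_e$ is 4-valent and the four faces of $G'$ around $M_e$ are exactly those corresponding to the two vertices $x_1,x_2$ of $G$ incident to $e$ and the two faces $f_1,f_2$ of $G$ incident to $e$ (the local picture is that of Figure~\ref{fig:Forman}). A face of $G'$ corresponding to a vertex $x\in V$ has its boundary formed by the midpoints $\{M_{e'}:x\prec e'\}$ joined by medial arcs, hence has degree $|x|$; symmetrically, a face of $G'$ corresponding to $f\in F$ has degree $|f|$. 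Therefore the sorted pattern of $M_e$ equals the sorted 4-tuple $(|x_1|,|x_2|,|f_1|,|f_2|)$.

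Next, I would combine this with the Forman-curvature formula \eqref{prop:FC1}, namely $\Ric(e)=16-(|x_1|+|x_2|+|f_1|+|f_2|)$, so that $\Ric(e)>0$ is equivalent to $|x_1|+|x_2|+|f_1|+|f_2|\le 15$. Together with the standing assumption $|x|,|f|\ge 3$, the admissible sorted patterns are integer 4-tuples $(a,b,c,d)$ with $3\le a\le b\le c\le d$ and $a+b+c+d\le 15$. The case $a\ge 4$ is ruled out since $4a\ge 16$, so $a=3$; splitting on $b\in\{3,4\}$ (with $b\ge 5$ impossible because $3+5+5+5=18>15$) and then on $c$ yields exactly the seven tuples listed in \eqref{vp}.

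The final step is to verify $\Phi(M_e)>0$. Since $M_e$ is 4-valent,
\[
\Phi(M_e)=1-\tfrac{4}{2}+\tfrac{1}{a}+\tfrac{1}{b}+\tfrac{1}{c}+\tfrac{1}{d}=-1+\tfrac{1}{a}+\tfrac{1}{b}+\tfrac{1}{c}+\tfrac{1}{d},
\]
so it suffices to check $\tfrac{1}{a}+\tfrac{1}{b}+\tfrac{1}{c}+\tfrac{1}{d}>1$ for each of the seven patterns: an elementary rational computation (for instance the tightest pattern $(3,4,4,4)$ already gives $\tfrac{1}{3}+\tfrac{3}{4}=\tfrac{13}{12}>1$). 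Applying this to every edge of any $G\in\fmp$ gives the final sentence of the proposition. I expect the principal obstacle to be not the arithmetic but the first step's bookkeeping: carefully arguing from the medial construction and the tessellation axioms that the four faces of $G'$ around $M_e$ are genuinely distinct and that their degrees are read off unambiguously as $|x_1|,|x_2|,|f_1|,|f_2|$. Once this identification is pinned down, the rest is the enumeration and the seven inequalities above.
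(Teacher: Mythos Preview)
Your argument is correct and follows essentially the same route as the paper: identify the four faces of $G'$ around $M_e$ with $x_1,x_2,f_1,f_2$, use \eqref{prop:FC1} to get $|x_1|+|x_2|+|f_1|+|f_2|\le 15$, enumerate the sorted $4$-tuples with entries $\ge 3$, and then check $\Phi(M_e)>0$ case by case. Your write-up is in fact more detailed than the paper's (which leaves both the enumeration and the curvature check implicit), and your observation that the only substantive point is the local identification of the faces around $M_e$ is apt.
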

\begin{proof} Let $x_1,x_2\in V$ and $f_1,f_2\in F$ satisfy $x_1\prec
 e,x_2\prec e, e\prec f_1, e\prec f_2.$ Then by \eqref{prop:FC1},
 $$|x_1|+|x_2|+|f_1|+|f_2|\leq 15.$$ By the structure of the medial
 graph $G',$ the facial degrees of faces incident to $M_{e}$ are given
 by $|x_1|,|x_2|,|f_1|$ and $|f_2|.$    See Figure~\ref{fig:4reg}.
Since $|x_i|\geq 3, |f_i|\geq 3,$ $i=1,2,$ we obtain the list of possible vertex patterns of $M_e$ in $G'$ as in \eqref{vp}. In particular, the case-by-case calculation yields $\Phi(M_e)>0.$ This proves the proposition.
\end{proof}
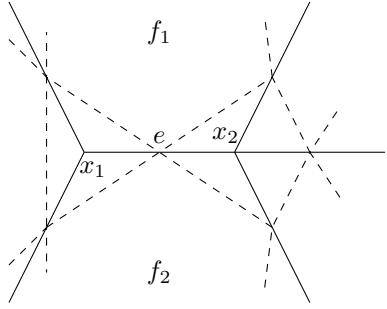
\begin{figure}[ht]\centering
  \begin{tikzpicture}[scale=2]
   \coordinate (a) at (-1,-1);
   \coordinate (b) at (-1,1);
   \coordinate (c) at (-1/2,0);

   \node[below] at (c) {$\ \ x_1$};
   
   \coordinate (d) at (1,-1);
   \coordinate (e) at (1,1);
   \coordinate (f) at (1/2,0);
   \coordinate (g) at (3/2,0);

   \node[above] at (f) {$x_2\ \ $};

   \node at (0,.8) {$f_1$};
   \node at (0,-.8) {$f_2$};
   \node[above] at (0,0) {$e$};
   
   \draw (a) -- (c) -- (b);
   \draw (d) -- (f) -- (e);
   \draw (c) -- (f) -- (g);

   \coordinate (ac) at (-3/4,-1/2);
   \coordinate (bc) at (-3/4,1/2);
   \coordinate (df) at (3/4,-1/2);
   \coordinate (fe) at (3/4,1/2);
   \coordinate (cf) at (0,0);
   \coordinate (fg) at (1,0);

   \draw[dashed] (ac)--(cf)--(bc)--cycle;
   \draw[dashed] (-1,-3/4)--(ac)--(-3/4,-.8);
   \draw[dashed] (-1,3/4)--(bc)--(-3/4,.8);
   \draw[dashed] (df)--(cf)--(fe)--(fg)--cycle;
   \draw[dashed] (.7,-.9)--(df);
   \draw[dashed] (fe)--(.7,.9);
   \draw[dashed] (1.2,-.3)--(1,0)--(1.2,.3);
  \end{tikzpicture}
 \caption{Medial graph $G'$~(dash) of a graph $G$~(solid).\label{fig:4reg}}
\end{figure}

For each $p=4,5,6$, a $p$-gonal
antiprism~(Figure~\ref{fig:ap6}) is in $\fm$.
\begin{figure}[ht]
	 \centering
\begin{tabular}[tb]{c|c|c}
  \includegraphics[scale=0.08]{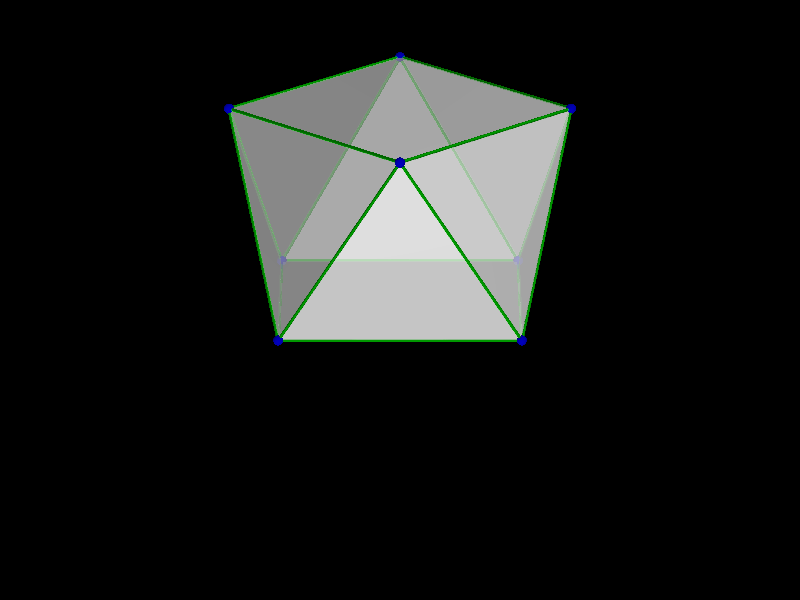}&
 \includegraphics[scale=0.08]{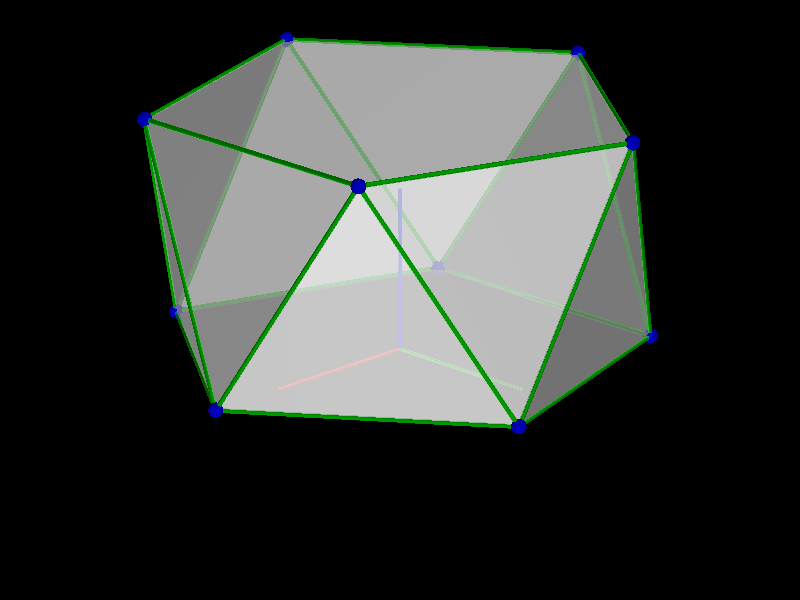} &
 \includegraphics[scale=0.08]{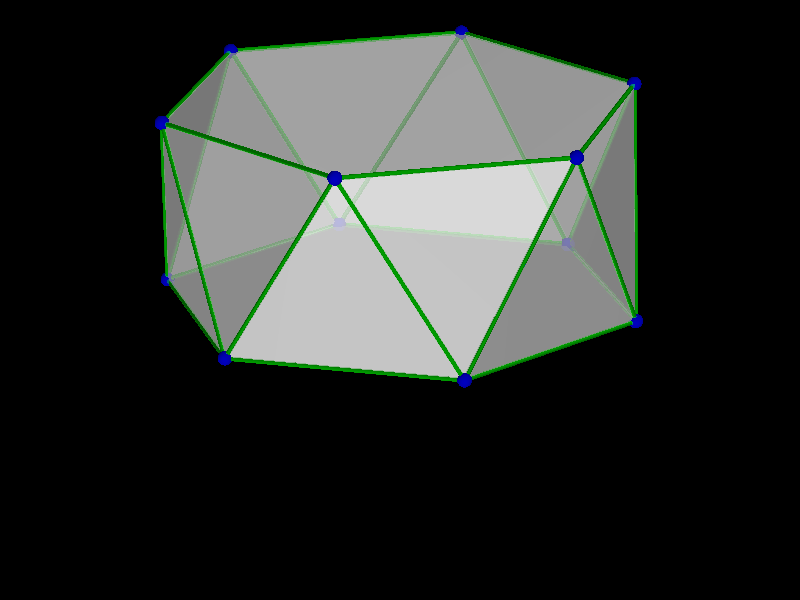} 
\end{tabular}
	 \caption{$p$-gonal antiprism~($p=4,5,6$).\label{fig:ap6}}
\end{figure}

We discuss the relation between graphs on $\SP^2$ and on $\R P^2.$
For any graph $G$ on $\R P^2,$ since $\pi: \SP^2\to \R P^2$ is a double cover, we can lift the graph structure $G$ to $\SP^2,$ denoted by $\widetilde{G}.$
\begin{theorem}\label{thm:rp2} For any tessellation $G$ on $\R P^2,$ the lifted graph $\widetilde{G}$ on $\SP^2$ is a tessellation.
\end{theorem}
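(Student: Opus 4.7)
The plan is to exploit the fact that $\pi\colon \SP^2 \to \R P^2$ is a two-fold covering map whose deck transformation (the antipodal map) is fixed-point free, together with the fact that every closed face of the tessellation $G$ is homeomorphic to a closed disk and hence simply connected. First I would set up the lifted cell structure $\wt{G} = (\wt{V},\wt{E},\wt{F})$ by taking $\wt{V}=\pi^{-1}(V)$, letting $\wt{E}$ be the set of connected components of $\pi^{-1}(e)$ as $e$ ranges over $E$, and letting $\wt{F}$ be the set of connected components of $\SP^2\setminus \pi^{-1}(V\cup E)$. The structural observation that drives everything is that, since each closed edge $\overline{e}$ and each closed face $\overline{\sigma}$ of $G$ is simply connected, path-connected, and locally path-connected, the lifting criterion (equivalently, the standard discussion of evenly covered simply connected subsets) implies that $\pi^{-1}(\overline{e})$ and $\pi^{-1}(\overline{\sigma})$ are each disjoint unions of exactly two copies, each mapped homeomorphically onto the downstairs set by $\pi$.

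Next I would verify the four tessellation axioms for $\wt{G}$ in turn. Axiom (i) is immediate: since $\R P^2$ is compact, axiom (i) for $G$ forces $G$ and hence $\wt{G}$ to be finite, so the compact $\SP^2$ is covered by the finitely many closed faces of $\wt{G}$. Axiom (ii) follows directly from the structural observation, because each lifted closed face is mapped homeomorphically onto a closed disk, and its boundary is the union of the unique lifts of the finitely many edges on $\partial\overline{\sigma}$ that it contains. For axiom (iii), at any interior point of a lifted edge $\wt{e}$ the map $\pi$ is a local homeomorphism onto a neighborhood of an interior point of $e=\pi(\wt{e})$, so the two closed faces of $\wt{G}$ incident to $\wt{e}$ are exactly the two lifts (one of each of the two closed faces of $G$ incident to $e$) that contain $\wt{e}$.

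The main obstacle, and the step requiring most care, is axiom (iv). Suppose $\wt{\sigma}_1,\wt{\sigma}_2\in\wt{F}$ are distinct with $\wt{\sigma}_1\cap\wt{\sigma}_2\ne\emptyset$, and set $\sigma_i=\pi(\wt{\sigma}_i)\in F$. If $\sigma_1=\sigma_2$, then $\wt{\sigma}_1$ and $\wt{\sigma}_2$ are the two distinct lifts of the same closed face, and by the structural observation they are disjoint, contradicting the hypothesis; so this case does not occur. If $\sigma_1\ne\sigma_2$, then axiom (iv) for $G$ tells us that $\overline{\sigma_1}\cap\overline{\sigma_2}$ is either a single vertex $v$ or a single closed edge $\overline{e}$. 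Since $\pi$ restricts to a homeomorphism on each $\wt{\sigma}_i$, each $\wt{\sigma}_i$ contains a unique preimage of each vertex, and a unique lift of each edge, on its boundary, and $\wt{\sigma}_1\cap\wt{\sigma}_2\subseteq \pi^{-1}(\overline{\sigma_1}\cap\overline{\sigma_2})$. Comparing which of the two preimage points (respectively which of the two lifted edges) is contained in each $\wt{\sigma}_i$ then shows that the nonempty intersection is a single vertex or a single closed edge, as required.
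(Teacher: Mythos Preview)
Your proposal is correct and follows essentially the same approach as the paper: both verify axioms (i)--(iii) quickly and concentrate on (iv), using that simply connected subsets of $\R P^2$ lift to two disjoint homeomorphic copies in $\SP^2$ to reduce the intersection analysis upstairs to the tessellation property downstairs. The only cosmetic difference is that the paper works with simply connected open neighborhoods of $\overline{\tau_1}\cup\overline{\tau_2}$ and pulls back the homeomorphism, whereas you apply the lifting property directly to the closed cells $\overline{\sigma}$ and $\overline{e}$; your packaging is slightly more streamlined but not a different argument.
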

\begin{proof}
We verify the tessellation properties, see the introduction, for $\widetilde{G}.$ The properties $(i),(ii),(iii)$ are trivial. It is sufficient to prove $(iv).$
That is, we need to prove the following: for any faces $\sigma_1, \sigma_2$ in $\widetilde{G},$
\begin{enumerate}[(a)]
\item if $\overline{\sigma_1}\cap\overline{\sigma_2}$ contains more than two points, then it is the closure of an edge, and 
\item if $\overline{\sigma_1}\cap\overline{\sigma_2}$ is one point, then it is a vertex.
\end{enumerate}

To prove $(a)$, suppose it is not true, then there are two faces $\sigma_1, \sigma_2$ in $\widetilde{G}$ such that 
$\overline{\sigma_1}\cap\overline{\sigma_2}$ contains at least two vertices, but it is not the closure of an edge. Let $\pi: \SP^2\to \R P^2$ be the universal cover, and $\tau_i=\pi(\sigma_i),$ $i=1,2.$ By the construction of the universal cover, we recall the following property: 
for any simply-connected open subset $U$ in $\R P^2,$ $\pi^{-1}(U)$ consists of two disjoint subsets $U_1$ and $U_2$ such that $\pi|_{U_i}: U_i\to U$ ($i=1,2$) are homeomorphisms, see e.g. the proof in \cite[pp. 64--65]{Hatcher02}. 

For any face $\tau$ in $G,$ since $\overline{\tau}$ is a closed disk, there exists an open neighborhood $A$ of $\overline{\tau}$ such that  $\pi^{-1}(A)=A_1\sqcup A_2$ and $\pi|_{A_i}$  ($i=1,2$) are homeomorphisms.

We claim that $\tau_1\neq \tau_2.$ Suppose it is not true, i.e. $\tau_1=\tau_2.$ Applying the above property for $\tau=\tau_1$, we obtain homeomorphisms $\pi|_{A_i}: A_i\to A,$ where $A$ is an open neighborhood of $\overline{\tau_1}.$ Since $\overline{\sigma_1}\cap\overline{\sigma_2}\neq \emptyset,$ 
$$\overline{\sigma_1}\cup\overline{\sigma_2}\subset A_1\ (\mathrm{or}\ A_2).$$ Without loss of generality, suppose that $\overline{\sigma_1}\cup\overline{\sigma_2}\subset A_1,$ then $(\overline{\sigma_1}\cup\overline{\sigma_2})\cap A_2=\emptyset.$ This yields a contradiction since $\pi^{-1}(\tau_1)=\sigma_1\cup\sigma_2,$ and $\pi|_{A_2}$ is a homeomorphism. This proves the claim.

For distinct points $\{p_1,p_2\}\subset \overline{\sigma_1}\cap\overline{\sigma_2},$ by the same argument as above for $\tau_1,$ we get a homeomorphism between a neighborhood of $\sigma_1$ and a neighborhood of $\tau_1.$ This implies that $\pi(p_1)\neq \pi(p_2).$ 
Hence $\overline{\tau_1}\cap\overline{\tau_2}$ consists of at least two points. By the tessellation properties of $G,$ $\overline{\tau_1}\cap\overline{\tau_2}$ is the closure of an edge in $G.$ Moreover, this yields that $\overline{\tau_1}\cup\overline{\tau_2}$ is homeomorphic to a closed disk. Then there exists an open neighborhood $W$ of $\overline{\tau_1}\cup\overline{\tau_2}$ such that  $\pi^{-1}(W)=W_1\sqcup W_2$ and $\pi|_{W_i}$  ($i=1,2$) are homeomorphisms. Since $\overline{\sigma_1}\cap\overline{\sigma_2}\neq \emptyset,$
$$\overline{\sigma_1}\cup\overline{\sigma_2}\subset W_1\ (\mathrm{or}\ W_2).$$ Since $\pi|_{W_1}$ is a homeomorphism, by the property of $\overline{\tau_1}\cap\overline{\tau_2},$ $\overline{\sigma_1}\cap\overline{\sigma_2}$ is the closure of an edge in $\widetilde{G}.$ This yields a contradiction and proves $(a).$
 
To prove $(b),$ for any faces $\sigma_1$ and $\sigma_2$ satisfying $\overline{\sigma_1}\cap\overline{\sigma_2}=\{p\},$ by the same argument as above, one can show that $\tau_1\neq\tau_2,$ where $\tau_i=\pi(\sigma_i),$ $i=1,2.$ Hence $\overline{\tau_1}\cap\overline{\tau_2}\neq \emptyset.$ We claim that $\overline{\tau_1}\cap\overline{\tau_2}$ is one point. Suppose that it contains more than two points, then it is the closure of an edge. Then the same argument as above yields that there exists a homeomorphism between an open neighborhood of $\overline{\tau_1}\cup\overline{\tau_2}$ and an open neighborhood of $\overline{\sigma_1}\cup\overline{\sigma_2}.$ This contradicts $\overline{\sigma_1}\cap\overline{\sigma_2}=\{p\}.$ This proves the claim. 

Hence by the tessellation properties of $G,$ $\overline{\tau_1}\cap\overline{\tau_2}$ is a vertex. Then $\overline{\tau_1}\cup\overline{\tau_2}$ is simply-connected and there exists a simply-connected, open neighborhood $W$ of $\overline{\tau_1}\cup\overline{\tau_2}.$ By the same argument as above, $W$ is homeomorphic to an open neighborhood of $\overline{\sigma_1}\cup\overline{\sigma_2}.$ This implies $p$ is a vertex, and yields the result $(b).$
This proves the theorem.
\end{proof}

By the above theorem, in order to classify the class $\fmp^{\R P^2}$, it is sufficient to classify the class $\fm$ and to figure out the candidates whose projections into $\R P^2$ are tessellations.

\subsection{Embedding}

To fully represent the embedded graph, we need
both the abstract graph $(V,E)$
  and the cyclic edge orders.  In the case of a graph with no multiedges, it is conventional
  to give both at once by listing neighbors in anti-clockwise order.

An \emph{adjacency list} of a graph $G$ is, by definition, a list of
pairs of a vertex $x$ and the counter-clockwise cyclic list $N_x$ of
vertices adjacent to $x$. \emph{A mirror image} of a graph $G$ is, by
definition, a graph such that an adjacency list is a list of pairs of
 a vertex $x$ and \emph{reversed} $N_x$.

 \begin{figure}[ht]\centering
  \begin{tabular}{c|c}
\begin{tikzpicture}
	       \coordinate (a102) at (1,1);
	       \coordinate (b102) at (0,1);
	       \coordinate (c102) at (.4,.6);
	       \coordinate (d102) at (.7,.3);
	       \coordinate (e102) at (1,0);
	       \coordinate (f102) at (0,0);

 \node at (a102) {$a$};
 \node at (b102) {$b$};
 \node at (c102) {$c$};
 \node at (d102) {$d$};
 \node at (e102) {$e$};
  \node at (f102) {$f$};
\draw (a102) -- (b102);
\draw (a102) -- (c102);
\draw (a102) -- (d102);
\draw (a102) -- (e102);

\draw (b102) -- (c102);
\draw (b102) -- (f102);

\draw (c102) -- (d102);

\draw (d102) -- (e102);
\draw (d102) -- (f102);

\draw (e102) -- (f102);
\end{tikzpicture} & 
\begin{tikzpicture}[xscale=-1]
	       \coordinate (a102) at (1,1);
	       \coordinate (b102) at (0,1);
	       \coordinate (c102) at (.4,.6);
	       \coordinate (d102) at (.7,.3);
	       \coordinate (e102) at (1,0);
	       \coordinate (f102) at (0,0);

 \node at (a102) {$a$};
 \node at (b102) {$b$};
 \node at (c102) {$c$};
 \node at (d102) {$d$};
 \node at (e102) {$e$};
  \node at (f102) {$f$};
\draw (a102) -- (b102);
\draw (a102) -- (c102);
\draw (a102) -- (d102);
\draw (a102) -- (e102);

\draw (b102) -- (c102);
\draw (b102) -- (f102);

\draw (c102) -- (d102);

\draw (d102) -- (e102);
\draw (d102) -- (f102);

\draw (e102) -- (f102);
   \end{tikzpicture}\\

\scriptsize  a:bcde,\ b:caf,\ c:dab,\ d:eacf,\ e:dfa,\ f:bed
   &
\scriptsize a:edcb,\ b:fac,\ c:bad,\ d:fcae,\ e:afd,\ f:deb
\end{tabular}
\caption{A graph and an adjacency list. The left part and the right part
  are  ``mirror image'' to each other.\label{fig:eg}}
\end{figure}
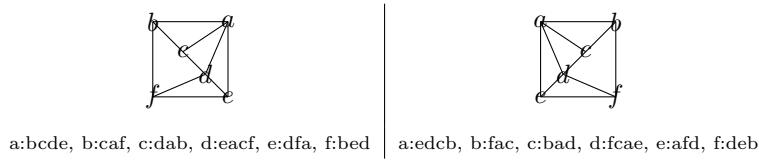
 Let $\G$ be the set of connected, simple, planar
graphs $G=(V,E,F)$ such that  both of face degrees and vertex degrees are  at least
3 and finite.

Given $G,G'\in\G$.  $G$ is said to be \emph{order-preserving
isomorphic}~(OP-isomorphic, for short) to $G'$, if there is a bijection
$\varphi$ from $G$ to $G'$ that preserves incidence relation for
vertices, edges, and faces, the cyclic edge-orderings around the
vertices. $G$ is said to be \emph{order-reversing
isomorphic}~(OR-isomorphic, for short) to $G'$, if $G$ is OP-isomorphic
to a mirror image of $G'$. $G$ is said to be \emph{isomorphic} to $G'$, 
if they are OP-isomorphic or OR-isomorphic~\cite{plantri}.

\begin{quote}
  ``plantri is a program that generates certain types of graphs that are
  imbedded on the sphere.
 
  Exactly one member of each isomorphism class is output, using an amount
  of memory almost independent of the number of graphs produced.  This,
  together with the exceptionally fast operation and careful validation,
  makes the program suitable for processing very large numbers of
  graphs.

    Exactly one member of each isomorphism class is output, using an amount
  of memory almost independent of the number of graphs produced.  This,
  together with the exceptionally fast operation and careful validation,
  makes the program suitable for processing very large numbers of graphs.

  Isomorphisms are defined with respect to the imbeddings, so in some
  cases outputs may be isomorphic as abstract graphs.''~(plantri-guide.txt~\cite{plantri})
\end{quote}

Below, the figures of graphs $G=(V,E,F)$ are based on some embeddings $\varphi$'s 
to the plane such that
\begin{itemize}
 \item the edges of $G$ become mutually noncrossing line
      segments of the figures; and
      
 \item  the cyclic ordering of edges of $G$ incident to each vertex
$v\in V$ becomes the counter-clockwise ordering of the line segments around the
point $\varphi(v)$.
\end{itemize}

For example, in Figure~\ref{fig:Schlegel}, the left is 3d-representation
of a graph as a convex polytope, and the right is an embedding of the
same graph such that the edges become mutually noncrossing line segments
and the cyclic edge-ordering around each vertex is preserved.
 \begin{figure}[ht]\centering
  \begin{tabular}{cc}
   \includegraphics[scale=0.3]{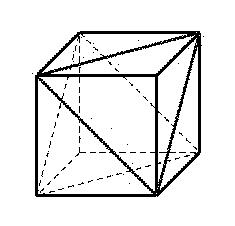} & 
			             \begin{tikzpicture}[yshift=1cm,scale=0.5]
\foreach \x in {1,...,3}
{
\draw[rotate=120*\x]
  (90:2) -- (210:2)--(150:0.5)--cycle;
\draw[rotate=120*\x]
  (0:0) --  (150:0.5) -- (270:0.5);
					 }
					 \draw (210:2) -- (90:2.5) -- (330:2);
					 \draw (90:2) -- (90:2.5);
					\end{tikzpicture}
\end{tabular}
   \caption{\label{fig:Schlegel}}
 \end{figure}
$J_n$ stands for the $n$-th Johnson solid~\cite{MR0185507,MR0240719}.

\section{Graphs on surfaces with positive Forman curvature}\label{sec:forman}
In this section, we give a new proof of Theorem~\ref{thm:mainpfm} and classify the class $\fmp.$

\begin{proof}[Proof of Theorem~\ref{thm:mainpfm}] Let $G'$ be the medial graph of $G.$ Then $G'$ is a tessellation of $S,$ see Proposition~\ref{prop:medtess}. Moreover, $G'$ has positive combinatorial curvature by Proposition~\ref{prop:listpfm}. By DeVos-Mohar's solution of Higuchi's conjecture, Theorem~\ref{thm:DeVosMohar2}, we prove the theorem.
\end{proof}

In the following, we prove Theorem~\ref{thm:classPF} by classifying the
class $\fmp.$ We need the following lemmas.
\begin{lemma} \label{lem:v1}
   For any medial graph $G'=(V',E',F')$ of $G\in \fmp$, the vertex
   pattern of any $v$ is given in \eqref{vp}, and $\#V'\le24$.
\end{lemma}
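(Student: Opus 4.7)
The first claim is immediate from Proposition~\ref{prop:listpfm}: every vertex $v\in V'$ of the medial graph $G'$ is of the form $M_e$ for a unique edge $e\in E$, and if $G\in\fmp$ then $\Ric(e)>0$, so the vertex pattern of $v=M_e$ must appear in the list \eqref{vp}.

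For the bound $\#V'\le 24$, the plan is to apply the discrete Gauss--Bonnet theorem \eqref{eq:GBthm} to $G'$ and use a uniform positive lower bound for $\Phi$ on the seven admissible patterns. By Proposition~\ref{prop:medtess} the medial graph $G'$ is a tessellation of the ambient surface $S$, and by Proposition~\ref{prop:listpfm} it has positive combinatorial curvature at every vertex; hence Theorem~\ref{thm:DeVosMohar2} forces $S=\SP^2$ or $\R P^2$, so $\chi(S)\le 2$ and $G'$ is finite.

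Next I would compute $\Phi(v)=1-\frac{|v|}{2}+\sum_{\sigma}\frac{1}{|\sigma|}=-1+\sum_{\sigma\prec v}\frac{1}{|\sigma|}$ for each of the seven patterns in \eqref{vp}, using $|v|=4$:
\begin{equation*}
\Phi(3,3,3,3)=\tfrac{1}{3},\ \Phi(3,3,3,4)=\tfrac{1}{4},\ \Phi(3,3,3,5)=\tfrac{1}{5},\ \Phi(3,3,3,6)=\tfrac{1}{6},
\end{equation*}
\begin{equation*}
\Phi(3,3,4,4)=\tfrac{1}{6},\ \Phi(3,3,4,5)=\tfrac{7}{60},\ \Phi(3,4,4,4)=\tfrac{1}{12}.
\end{equation*}
The minimum is $\tfrac{1}{12}$, attained by pattern $(3,4,4,4)$.

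Combining these, Gauss--Bonnet gives
\begin{equation*}
\tfrac{1}{12}\cdot\#V'\ \le\ \sum_{v\in V'}\Phi(v)\ =\ \chi(S)\ \le\ 2,
\end{equation*}
hence $\#V'\le 24$. The only non-routine step is verifying the curvature list, which is a seven-case arithmetic check; the rest is bookkeeping that Gauss--Bonnet applies and that the minimal curvature value is $\tfrac{1}{12}$.
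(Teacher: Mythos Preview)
Your proof is correct and follows essentially the same approach as the paper: invoke Proposition~\ref{prop:listpfm} for the vertex patterns, check that the minimum of $\Phi$ over the seven patterns is $\tfrac{1}{12}$, and apply the discrete Gauss--Bonnet theorem with $\chi(S)\le 2$. Your version is slightly more explicit (listing all seven curvature values and invoking Theorem~\ref{thm:DeVosMohar2} to justify $\chi(S)\le 2$), whereas the paper simply asserts $\Phi(v)\ge\tfrac{1}{12}$ and $\sum_v\Phi(v)\le 2$; one small notational slip is that your sum should read $\sum_{\sigma:\, v\prec\sigma}$ rather than $\sum_{\sigma\prec v}$.
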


\begin{proof}The list of vertex patterns follows from Proposition~\ref{prop:listpfm}. Moreover, for any vertex $v,$ whose vertex pattern is given in the list, $\Phi(v)\geq 1/12.$  By the discrete Gauss-Bonnet theorem \eqref{eq:GBthm}, 
$$\frac{1}{12}\sharp V'\leq \sum_{v\in V'} \Phi(v)\leq 2.$$ This yields the result.
\end{proof}

A graph $G=(V,E,F)$ on $\SP^2$ is called a \emph{spherical quadrangulation} if $|f|=4$ for any $f\in F.$ For any $f\in F,$ the \emph{face pattern} of $f$ is given by $(|v_1|,|v_2|,|v_3|,|v_4|),$ where $\{v_i\}_{i=1}^4$ are vertices incident to $f$ and $|v_1|\leq |v_2|\leq |v_3|\leq |v_4|.$ Let $\mathcal{Q}$ be the set of $2$-connected, simple, spherical quadrangulations $G=(V,E,F)$ with $\sharp F\leq 24,$ whose face patterns are in the list \eqref{vp}.
\begin{lemma} \label{lem:v2} For any $G\in \fm$, $(G')^*\in \mathcal{Q}.$ \end{lemma}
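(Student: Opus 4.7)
The plan is to verify each defining property of $\mathcal{Q}$ for $(G')^*$, using the structural facts already established about $G'$. Throughout, I will use that $G' = (V',E',F')$ is a tessellation of $\SP^2$ by Proposition \ref{prop:medtess}, that it is $4$-regular by the construction of the medial graph, and that it is finite because $G$ is finite by Theorem \ref{thm:mainpfm}.

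First I would check that $(G')^*$ is a spherical quadrangulation of bounded size with admissible face patterns. By Proposition \ref{prop:dualte}, $(G')^*$ is also a tessellation of $\SP^2$. Since facial degrees in $(G')^*$ equal vertex degrees in $G'$ and $G'$ is $4$-regular, every face of $(G')^*$ has degree $4$. The faces of $(G')^*$ are in bijection with $V'$, so $\sharp F((G')^*) = \sharp V' \leq 24$ by Lemma \ref{lem:v1}. For the face patterns: a face $v^* \in F((G')^*)$ corresponds to a vertex $v \in V'$, and its boundary vertices are dual to the four faces incident to $v$ in $G'$; their degrees in $(G')^*$ therefore equal the facial degrees appearing in the vertex pattern of $v$, which lies in the list \eqref{vp} by Lemma \ref{lem:v1}.

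Next I would verify that $(G')^*$ is $2$-connected and simple. The $2$-connectivity is immediate from Proposition \ref{prop:tess2conn} applied to the finite planar tessellation $(G')^*$. For simplicity, a multi-edge in $(G')^*$ would correspond to two distinct closed faces of $G'$ sharing more than one edge, which is forbidden by tessellation property (iv) for $G'$; a loop in $(G')^*$ would correspond to an edge of $G'$ lying on the boundary of only one closed face, which is forbidden by tessellation property (iii).

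The main potential obstacle is the simplicity check: one must carefully invoke tessellation property (iv) in the strong form that two distinct closed faces meet in at most a single vertex or a single closed edge, ruling out in particular the possibility of two disjoint shared edges. Everything else is essentially a translation between $G'$, its dual, and the enumeration carried out in Lemma \ref{lem:v1}.
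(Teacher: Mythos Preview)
Your proposal is correct and follows essentially the same approach as the paper: the paper's proof is a one-line citation of Proposition~\ref{prop:dualte}, Proposition~\ref{prop:tess2conn}, Proposition~\ref{prop:medtess}, and Lemma~\ref{lem:v1}, and you have simply unpacked what each of these contributes, including the simplicity check (which the paper leaves implicit in the tessellation properties).
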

\begin{proof} Since $G'$ is $4$-regular, the lemma follows from Proposition~\ref{prop:dualte}, Proposition~\ref{prop:tess2conn}, Proposition~\ref{prop:medtess}, and Lemma~\ref{lem:v1}.
\end{proof}

\begin{proof}[Proof of Theorem~\ref{thm:classPF}]
We first classify the finite set $\fm.$  
By Lemma~\ref{lem:v2}, for any $G\in \fm,$ $(G')^*\in \mathcal{Q}.$ We enumerate the class $\mathcal{Q},$ which serves the set of candidates of $(G')^*$ for some $G\in \fm,$  by a computer program. Then using the duality, we obtain $G'$ from the output, and by Theorem~\ref{thm:arch}, we construct the dual pair $(G,G^*),$ whose medial graph is $G',$ in the canonical way (described after Theorem~\ref{thm:arch}). This gives the classification of $\fm.$

The program is
a modification of Brinkmann-McKay's \texttt{plantri}.  \texttt{Plantri}
is a program that generates certain types of graphs that are embedded on
the sphere.  Exactly one member of each isomorphism class is output,
using an amount of memory almost independent of the number of graphs
produced.  Here isomorphisms are graph isomorphisms which also take
embedding to the plane~(sphere) into account.  In particular, \texttt{plantri}
enumerates quickly all 4-regular $H$, such as medial graphs, by an algorithm developed
in~\cite{MR2186681}.  Every
spherical quadrangulation of vertices more than 4 is obtained from a pseudo-double wheel through finite applications of two local expansions~(see
Figure~\ref{p_expansion}).
\begin{figure}[ht]\centering
\begin{tabular}{c}
\includegraphics[scale=.2]{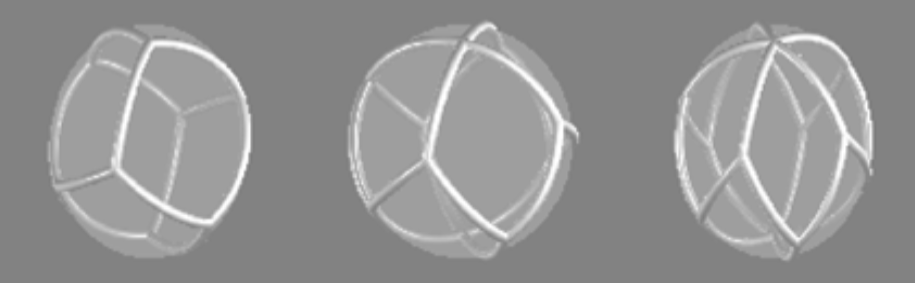}\\
\includegraphics[scale=.1]{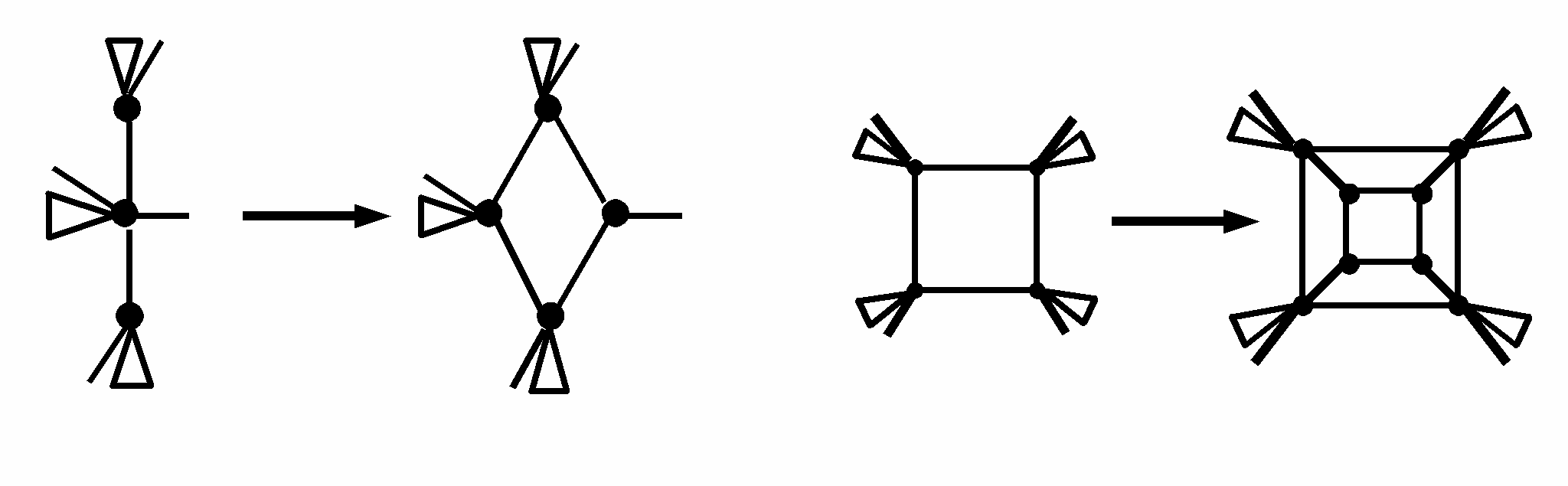}\end{tabular}

\caption{$p$-gonal pseudo-double wheels~($p=3,4,5$), and two expansions of spherical quadrangulations to increase the number
 of faces (Brinkmann et al.~\cite{MR2186681}). 
 \label{p_expansion}}
\end{figure}

We added to \texttt{plantri}, the following:
 \begin{enumerate}

  \item the restriction of vertex pattern of $H$ to be one of \eqref{vp};
	
  \item the computation of the dual pair $(G,G^*)$ such that $H$ is the
	medial graph of $(G,G^*)$.
 \end{enumerate}

As a result, we found that the set of medial graphs $G'$ of
$\fm$-graphs consists of 73 simple, 2-connected, 4-regular, planar graphs.
For each graph $G'$, we generate the dual pair $(G,G^*)$ 
such that the $G'$ is the medial graph of $(G,G^*)$.  We check that all obtained pairs $(G,G^*)$ are tessellations on $\SP^2.$ There are exactly 30 self-dual pairs. We say the pair
$(G,G^*)$ is \emph{self-dual} if $G=G^*$. Hence $\fm$ consists of
$30+(73-30)\times2=116$ graphs.


Among the 116 $\fm$-graphs, only five graphs $\fm$-E$10$-1V$6$,
$\fm$-E$12$-8V$7$, $\fm$-E12-8V7Dual, $\fm$-E$13$-1V$7$, and $\fm$-E$13$-1V$8$Dual are
not 3-connected. Hence, the five graphs are not skeletons of convex
polyhedra, by Steinitz's theorem~\cite{MR1311028}.

 \begin{table}[ht]\centering
 &
		 %
         8   &  

     \end{tabular}
 \caption{2 21-edge $\fm$-graphs, 1 22-edge $\fm$-graphs, and 3 24-edge $\fm$-graphs.\label{tbl:212224}}
\end{table}

The 116 $\fm$-graphs are presented in Table~\ref{tbl:6891011}, Table~\ref{tbl:12},
Table~\ref{tbl:13}, Table~\ref{tbl:14}, Table~\ref{tbl:15},
Table~\ref{tbl:16}, Table~\ref{tbl:17}, Table~\ref{tbl:18},
Table~\ref{tbl:1920}, and Table~\ref{tbl:212224}. 
Graphs $G=(V,E,F)$ in each row of each table are ordered by $\#V$.  The
rows in each table are ordered by $\#E$, $\#V$, the
vertex-connectedness, and the self-duality of the first graph
$G=(V,E,F)$ of the respective row.  The second column of each table is
for numbering rows among the same $\#E$. For self-dual $G$, we do not present the
dual $G^*$ in the table. Rows $\fm$-E$10$-1, $\fm$-E$12$-8, and
$\fm$-E$13$-1 are indicated with $\dag$. In the three rows, each graph
is not 3-connected.  But in the other rows, each graph is 3-connected.
In the tables of $\fm$-graphs, a graph of $i$ edges, No.~$j$, $k$
vertices is referred by $\fm$-E$i$-$j$V$k$, and the dual graph by
$\fm$-E$i$-$j$V$k$Dual.  

By the classification of $\fm,$ using Theorem~\ref{thm:rp2}, we can determine the class $\fmp^{\R P^2}$ as follows: for any $G\in \fm$ we check whether there exists a fixed-point free, involutive isomorphism of $G$ and the according projection in $\R P^2$ is a tessellation.
We show that $\fmp^{\R
P^2}=\emptyset$.
This completes the proof of Theorem~\ref{thm:classPF}. 
\end{proof}

\section{Graphs on surfaces with positive corner curvature}\label{sec:corner}
\label{sec:PCnC}
In this section, we prove Theorem~\ref{thm:corner curvature}.

\begin{lemma}\label{lem:pcnc}
For any $G=(V,E,F)\in\PCnC$, we have the following:
 \begin{enumerate}
  \item \label{vertex type:pcnc} Both of the vertex degrees and the facial degrees are 3,4,or 5, and
    the vertex pattern of any $d$-valent vertex is $(3^d)$ for any $d=4,5$.

  \item \label{dual:pcnc}  $G^*\in \PCnC$.
 \end{enumerate}
\end{lemma}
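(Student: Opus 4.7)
The plan is to reduce both statements to the single sharp inequality $C(x,\sigma)>0$, i.e.\
\[
\frac{1}{|x|}+\frac{1}{|\sigma|}>\frac{1}{2},
\]
and to exploit the manifest symmetry of this inequality under swapping $|x|$ and $|\sigma|$.

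For part (\ref{vertex type:pcnc}), I would first bound vertex and face degrees. Fix a corner $(x,\sigma)$. Since by our standing assumption $|\sigma|\ge3$, one has $1/|\sigma|\le 1/3$, so the positivity condition forces $1/|x|>1/2-1/3=1/6$, i.e.\ $|x|\le5$. By symmetry $|\sigma|\le5$. Next, the only pairs $(|x|,|\sigma|)$ with $|x|,|\sigma|\in\{3,4,5\}$ satisfying the strict inequality are, by direct enumeration,
\[
(3,3),\ (3,4),\ (4,3),\ (3,5),\ (5,3).
\]
In particular, whenever $|x|\in\{4,5\}$, every face $\sigma$ incident to $x$ must satisfy $|\sigma|=3$, which is exactly the assertion that the vertex pattern of a $d$-valent vertex is $(3^d)$ for $d=4,5$. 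The symmetric conclusion for faces will also be needed later but falls out of the same enumeration.

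For part (\ref{dual:pcnc}), first invoke Proposition~\ref{prop:dualte} to know that $G^*$ is again a tessellation (here the hypothesis that $G$ is finite is needed, but this is implicit since $\PCnC\subset$ (graphs of positive combinatorial curvature), which are finite by Theorem~\ref{thm:DeVosMohar2}). The corners of $G^*$ are in natural bijection with the corners of $G$: a corner $(x,\sigma)$ of $G$ corresponds to the corner $(\sigma^*,x^*)$ of $G^*$, where $\sigma^*\in V(G^*)$ is the dual vertex of $\sigma$ and $x^*\in F(G^*)$ is the dual face of $x$. By construction $|\sigma^*|_{G^*}=|\sigma|_G$ and $|x^*|_{G^*}=|x|_G$, so the corner curvature formula, being symmetric in its two arguments, gives
\[
C_{G^*}(\sigma^*,x^*)=\frac{1}{|\sigma^*|_{G^*}}+\frac{1}{|x^*|_{G^*}}-\frac12=\frac{1}{|x|_G}+\frac{1}{|\sigma|_G}-\frac12=C_G(x,\sigma)>0.
\]
Hence $G^*\in\PCnC$.

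There is no serious obstacle here; the only point requiring any care is the bookkeeping in the bijection between corners of $G$ and corners of $G^*$, which is standard given the tessellation hypothesis. The enumeration of admissible pairs in part (\ref{vertex type:pcnc}) is immediate from the inequality, and the symmetry argument of part (\ref{dual:pcnc}) is essentially a tautology once the duality of tessellations is in hand.
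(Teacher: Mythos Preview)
Your proof is correct and follows the natural approach. The paper itself does not spell out a proof of this lemma: part~(\ref{vertex type:pcnc}) is taken as immediate from the inequality $1/|x|+1/|\sigma|>1/2$, and part~(\ref{dual:pcnc}) is referred to Baues--Peyerimhoff~\cite{MR2243299} in the introduction. Your argument makes explicit exactly the computation and the duality bookkeeping that the paper leaves to the reader, including the reduction to finiteness via Theorem~\ref{thm:DeVosMohar2} needed to invoke Proposition~\ref{prop:dualte}.
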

By Lemma~\ref{lem:pcnc}~\eqref{vertex type:pcnc},
the minimum combinatorial curvature of the vertex pattern is 1/10.
So the number of vertices is at most 20. By
Lemma~\ref{lem:pcnc}~\eqref{dual:pcnc}, the number of faces is so. It also implies that if we get all the graphs in $\PCnC$ whose $\#V\le\#F$, after dual operation we can get all the graphs in $\PCnC$.
\begin{theorem}
  \label{assert:12} For any $G=(V,E,F)\in\PCnC$, $\#V\le\#F$ implies $\#V \le 12$.
\end{theorem}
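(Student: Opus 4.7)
The plan is to combine the discrete Gauss-Bonnet theorem with Euler's formula and the degree restrictions of Lemma~\ref{lem:pcnc}. First I observe that by Theorem~\ref{thm:rp2} it suffices to treat $S=\SP^2$: any $\PCnC$-tessellation of $\R P^2$ lifts to a sphere tessellation with twice as many vertices and faces, which inherits the hypothesis. Setting $\chi=2$, write $V_d$ and $F_k$ for the numbers of vertices of degree $d$ and $k$-sided faces; by Lemma~\ref{lem:pcnc}\eqref{vertex type:pcnc} all degrees lie in $\{3,4,5\}$, and a vertex of degree $4$ or $5$ has all its incident faces triangular. The edge-incidence identity $2\#E=3V_3+4V_4+5V_5=3F_3+4F_4+5F_5$ combined with $\#V-\#E+\#F=2$ yields the two relations
\[
V_3+F_3-V_5-F_5 \;=\; 8, \qquad 3(\#F-\#V)\;=\;V_4+2V_5-F_4-2F_5.
\]
In particular $\#V\le \#F$ is equivalent to $V_4+2V_5\ge F_4+2F_5$, and eliminating $V_3$ produces the working identity
\[
\#V \;=\; 8+V_4+2V_5+F_5-F_3.
\]

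Next, Gauss-Bonnet $\sum_x\Phi(x)=2$ combined with the forced values $\Phi=1/3$ on $V_4$, $\Phi=1/6$ on $V_5$, and the minimum $\Phi\ge 1/10$ on $V_3$ (attained at pattern $(5,5,5)$) gives $3V_3+10V_4+5V_5\le 60$, hence
\[
\#V \;\leq\; 20-\tfrac{7V_4+2V_5}{3}.
\]
Thus $\#V\le 12$ is immediate whenever $7V_4+2V_5\ge 24$. This covers the extremal icosahedron case $V_5=12$, and, by the integrality of $\#V$, also the near-borderline pairs such as $(V_4,V_5)\in\{(0,11),(1,8),(2,5),(3,1)\}$.

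The remaining case $7V_4+2V_5\le 23$ reduces to a finite list of $(V_4,V_5)$-pairs, dominated by $V_4=0$, $V_5\le 10$. Here the critical structural input from Lemma~\ref{lem:pcnc} is that any edge joining a $V_3$-vertex to a vertex of $V_4\cup V_5$ has both its incident faces triangular (since the higher-degree endpoint has pattern $(3^d)$), so the $V_3$-endpoint has pattern of the form $(3,3,*)$. Combined with the triangle-corner count $3F_3=n_{33}+4V_4+5V_5$, where $n_{33}\ge 0$ counts triangle-corners at $V_3$-vertices, and the hypothesis $V_4+2V_5\ge 2F_5$ (which bounds the number of pentagonal faces), this improves the lower bound on $F_3$ enough to drive $V_4+2V_5+F_5-F_3\le 4$ in each subcase, giving $\#V\le 12$ via the working identity.

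The main obstacle is that the Gauss-Bonnet bound and the hypothesis $\#V\le\#F$ become simultaneously tight only at the icosahedron, so neither inequality alone closes every subcase; the argument must track the joint structure of triangle-corners around $V_4\cup V_5$ and of pentagonal faces, i.e.\ carry both $n_{33}$ and $F_5$ through the finite casework. Integrality of the vertex counts then converts several borderline linear-programming bounds (e.g.\ $\#V\le 38/3$ when $V_4=0$, $V_5=11$) into the sharp conclusion $\#V\le 12$.
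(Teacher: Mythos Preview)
Your setup is clean and correct through the Gauss--Bonnet bound $\#V\le 20-(7V_4+2V_5)/3$, and the identities you derive check out. The gap is in the final paragraph, where you claim that the structural input about $(3,3,*)$-patterns ``improves the lower bound on $F_3$ enough to drive $V_4+2V_5+F_5-F_3\le 4$ in each subcase.'' This inequality is simply false as a numerical consequence of the bounds you list. Take $V_3=5$, $V_4=0$, $V_5=9$ (combination (7) in the paper's enumeration): Gauss--Bonnet forces every $V_3$-vertex to have pattern $(5,5,5)$, so $n_{33}=0$, whence $F_3=15$, $F_4=0$, $F_5=3$, and $V_4+2V_5+F_5-F_3=18+3-15=6>4$. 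Your triangle-corner count $3F_3=n_{33}+4V_4+5V_5$ and the bound $F_5\le(V_4+2V_5)/2$ together give only $V_4+2V_5+F_5-F_3\le (V_4+8V_5)/6-n_{33}/3$, which is far from $\le 4$ when $V_5$ is large and $n_{33}$ small.

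What actually kills this case is not an improved $F_3$-bound but the connectivity argument you mention and then abandon: a $(5,5,5)$-vertex cannot be adjacent to any vertex of degree $\ge 4$ (the edge would need both incident faces to be triangles), so by connectedness either $V_3=0$ or $V_4+V_5=0$ whenever all degree-$3$ vertices are $(5,5,5)$. The paper makes this precise as Fact~\ref{fact:pcnc}: if $v_5>0$ and $f_5>0$ then some degree-$3$ vertex has pattern $(3,3,5)$, proved by taking a shortest path from a $5$-valent vertex to a pentagon. This (together with the companion statement about $n_{335}$) is what eliminates all fifteen surviving numerical combinations with $13\le\#V\le 16$; the paper reaches that range via the medial graph and then finishes by computer enumeration plus Fact~\ref{fact:pcnc}. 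Your route could in principle avoid the medial graph, but you must actually carry out the connectivity argument case by case (or prove a sharp lower bound on $n_{33}$ from it), not fold it into an unproven arithmetic inequality.
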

 \begin{proof} By the Euler formula, $\#V \le  \#E/2 +1$. Because
  $G$ is a tessellation, $G$ has a medial graph $G'=(V',E',F')$
  and $\#E=\#V'$, by Proposition~\ref{prop:medtess}. By Lemma~\ref{lem:pcnc}~\eqref{vertex type:pcnc},
    the vertex pattern of $G'$ is $(3,3,k,l)$ $(k,l \in \{3,4,5\})$. The minimum combinatorial curvature of $G'$ is $1/15$ given by $(3,3,5,5)$.
    Then $\#V' \le  2/(1/15) = 30$. Hence, $\#V \le  \#E/2 + 1 = \#V'/2 + 1 = 16$.

 Then we need to prove when $13\le\#V\le16$, $G$ is not in $\PCnC$. The number of vertices of degree $d$, the number of faces of facial
degree $d$, the number of vertices, the number of edges, the
number of faces, and the number of vertices of  pattern $p$ are denoted
  by $v_d,f_d,v,e$, $f$, and $n_p$ respectively.
   By simple computation of a  brute-force computer program, the only combinations of nonnegative integers
 $v_3$, $f_3$~($3\le d\le 5$), $v$, $e$, $f$, $n_p$~($p=333$, $334$, $335$,
  $344$, $345$, $355$, $444$, $445$, $455$, $555$)
 such that
 \begin{align*}
     v &=\sum_{d=3}^5 v_d,\qquad   f = \sum_{d=3}^5 f_d,\qquad    v-e+f = 2, \qquad
   2e = \sum_{d=3}^5 d v_d,\\
  v &\le f,\\
 13\le v & \le 16,\\
   v_3 &= n_{333} +n_{334} +n_{335} +n_{344} +n_{345} +n_{355} +n_{444} +n_{445} +n_{455} +n_{555} ,\\
   3f_3 &= 3n_{333} +2n_{334} +2n_{335} +n_{344} +n_{345} +n_{355} +4v_4+5v_5,\\
   4f_4 &= n_{334} +2n_{344} +n_{345} +3n_{444} +2n_{445} +n_{455} ,\\
   5f_5 &= n_{335} +n_{345} +2n_{355} +n_{445} +2n_{455} +3n_{555},\end{align*}
are the following 15 combinations:
 \begin{verbatim}
(1) v3=5,v4=1,v5=7,f3=13,f4=0,f5=3,v=13,e=27,f=16,n333=0,n334=0,
	n335=0,	n344=0,n345=0,n355=0,n444=0,n445=0,n455=0,n555=5

(2) v3=7,v4=1,v5=5,f3=10,f4=0,f5=4,v=13,e=25,f=14,n333=0,n334=0,
	n335=0,n344=0,n345=0,n355=1,n444=0,n445=0,n455=0,n555=6

(3) v3=8,v4=1,v5=4,f3=8,f4=1,f5=4,v=13,e=24,f=13,n333=0,n334=0,
	n335=0,n344=0,n345=0,n355=0,n444=0,n445=0,n455=4,n555=4

(4) v3=8,v4=1,v5=4,f3=8,f4=1,f5=4,v=13,e=24,f=13,n333=0,n334=0,
	n335=0,n344=0,n345=0,n355=0,n444=0,n445=1,n455=2,n555=5

(5) v3=8,v4=1,v5=4,f3=8,f4=1,f5=4,v=13,e=24,f=13,n333=0,n334=0,
	n335=0,n344=0,n345=0,n355=0,n444=0,n445=2,n455=0,n555=6

(6) v3=8,v4=1,v5=4,f3=8,f4=1,f5=4,v=13,e=24,f=13,n333=0,n334=0,
	n335=0,n344=0,n345=0,n355=0,n444=1,n445=0,n455=1,n555=6

(7) v3=5,v4=0,v5=9,f3=15,f4=0,f5=3,v=14,e=30,f=18,n333=0,n334=0,
	n335=0,n344=0,n345=0,n355=0,n444=0,n445=0,n455=0,n555=5

(8) v3=7,v4=0,v5=7,f3=12,f4=0,f5=4,v=14,e=28,f=16,n333=0,n334=0,
	n335=0,n344=0,n345=0,n355=1,n444=0,n445=0,n455=0,n555=6

(9) v3=8,v4=0,v5=6,f3=10,f4=1,f5=4,v=14,e=27,f=15,n333=0,n334=0,
	n335=0,n344=0,n345=0,n355=0,n444=0,n445=0,n455=4,n555=4

(10) v3=8,v4=0,v5=6,f3=10,f4=1,f5=4,v=14,e=27,f=15,n333=0,
	n334=0,n335=0,n344=0,n345=0,n355=0,n444=0,n445=1,n455=2,n555=5

(11) v3=8,v4=0,v5=6,f3=10,f4=1,f5=4,v=14,e=27,f=15,n333=0,
	n334=0,n335=0,n344=0,n345=0,n355=0,n444=0,n445=2,n455=0,n555=6

(12) v3=8,v4=0,v5=6,f3=10,f4=1,f5=4,v=14,e=27,f=15,n333=0,
	n334=0,n335=0,n344=0,n345=0,n355=0,n444=1,n445=0,n455=1,n555=6

(13) v3=9,v4=0,v5=5,f3=9,f4=0,f5=5,v=14,e=26,f=14,n333=0,n334=0,
	n335=0,n344=0,n345=0,n355=2,n444=0,n445=0,n455=0,n555=7

(14) v3=9,v4=0,v5=5,f3=9,f4=0,f5=5,v=14,e=26,f=14,n333=0,n334=0,
	n335=1,n344=0,n345=0,n355=0,n444=0,n445=0,n455=0,n555=8

(15) v3=10,v4=0,v5=6,f3=10,f4=0,f5=6,v=16,e=30,f=16,n333=0,
	n334=0,n335=0,n344=0,n345=0,n355=0,n444=0,n445=0,n455=0,n555=10
 \end{verbatim}

  The last combination is rejected by the following
  Fact~\ref{fact:pcnc}~\eqref{n335:pcnc}, and the other combinations by 
  Fact~\ref{fact:pcnc}~\eqref{deg5:pcnc}.

  \begin{fact}\label{fact:pcnc}
 \begin{align}
  \label{deg5:pcnc}  & v_5=0,\ f_5=0,\ \mbox{or}\ n_{335} >0.\\
  \label{n335:pcnc}  &  n_{335} \ne 1,\ n_{345} >0,\ \mbox{or}\ n_{355} >0.
   \end{align}
  \end{fact}
\begin{proof}

 \eqref{deg5:pcnc} Assume $v_5>0$ and $f_5>0$. Then, there are a 5-valent vertex $x$ and a 5-gon $f$.  
By Lemma~\ref{lem:pcnc}~\eqref{vertex type:pcnc}, for $d=4,5$, any $d$-valent vertex has type $(3^d)$.
So, $x$ is not a vertex of a 5-gon.  Because of the connectedness, there
is a shortest path $P$ from $x=x_0, x_1, x_2, \ldots, x_n$ such that $x_n$ is a vertex of the 5-gon $f$.
The edge $\{x_0,x_1\}$ is shared by two 3-gons, because $x_0$ has type $(3^5)$ by Lemma~\ref{lem:pcnc}~\eqref{vertex type:pcnc}.
If $n=1$, then the degree $d_1$ of $x_1$ is 3, by Lemma~\ref{lem:pcnc}~\eqref{vertex type:pcnc}. Hence $n_{335}>0$.
If $n>1$, then the degree $d_1$ of $x_1$ is 4 or 5, because $P$ is a shortest path. By Lemma~\ref{lem:pcnc}~\eqref{vertex type:pcnc}, the vertex pattern of $x_1$ is $(3^d)$. Hence, the edge $\{x_1,x_2\}$ is shared by two 3-gons.
By repeating this argument, each $\{x_{i-1},x_i\}$ $(1\le i\le n)$ is shared by two 3-gons. 
By Lemma~\ref{lem:pcnc}~\eqref{vertex type:pcnc}, $x_n$ has vertex type $(3,3,5)$. Hence $n_{335}>0$.


\eqref{n335:pcnc}
Assume $n_{335} =1$ and $n_{345} =0$. By $n_{335} =1$, there is a unique vertex $u$ of type
 $(3,3,5)$. The two vertices adjacent to $u$ in the 5-gon should have
 vertex patterns $(3,x,5)$ and $(3,y,5)$ for some $x\ne3$ and some
 $y\ne3$.
 This establishes Fact~\ref{fact:pcnc}.
\end{proof}
Therefore, for any $G=(V,E,F)\in\PCnC$, if $\#V\le\#F$, then
  $\#V\le12$. This completes the proof of Theorem~\ref{assert:12}.
\end{proof}

\begin{proof}[Proof of Theorem~\ref{thm:corner curvature}]
By Lemma~\ref{lem:pcnc} and Theorem~\ref{assert:12}, we enumerate the finite set $\PCnC$ by a computer program. We first classify planar graphs in $\PCnC.$ The program
is a modification of Brinkmann-McKay's \texttt{plantri}. Their program
\texttt{plantri} can enumerate all the simple, 2-connected, planar
graphs such that all facial degree is greater than 2 and less than 6 and
the number $v$ of vertices is a given number.  We modify
\texttt{plantri} so that any vertex pattern is one of
\begin{align*}
&(3^d)\ (d=3,4,5),\quad (3,3,4), (3,3,5), (3,4,4), (3,4,5), (3,5,5),\\
&(4,4,4), (4,4,5), (4,5,5), (5,5,5),
\end{align*}
and $v$ is at most the number of faces, based on
Lemma~\ref{lem:pcnc}~\eqref{vertex type:pcnc},\eqref{dual:pcnc}.  Then,
we run the modified \texttt{plantri} with the number of vertices
$4,\ldots,12$.  It outputs 13 graphs. By
Lemma~\ref{lem:pcnc}~\eqref{dual:pcnc}, we take the dual graphs into
account. Then we obtain 22 simple, 2-connected, planar graphs such that
corner curvature is positive everywhere.  Table~\ref{tbl:PCnC1} and Table~\ref{tbl:PCnC} present
the 22 graphs. There are one 4-edge self-dual graph, one 8-edge self-dual
graph, and one 10-edge self-dual graph. The other 19 graphs are not self-dual.
All the 22 planar graphs of $\PCnC$ are actually 3-connected. 

From the classification of planar graphs in $\PCnC,$ see
 Table~\ref{tbl:PCnC1} and Table~\ref{tbl:PCnC}, by Theorem~\ref{thm:rp2}, we obtain the classification of graphs on $\R P^2$ in $\PCnC.$ By similar case-by-case checking as in the proof of Theorem~\ref{thm:classPF}, it consists of hemi-icosahedron and hemi-dodecahedron, see e.g. \cite{McMullen92}, which are the
projections of the regular icosahedron and the regular dodecahedron on $\SP^2$ into $\R P^2$ via the double cover $\pi:\SP^2\to \R P^2.$
\end{proof}

\begin{table}[ht]\centering
\begin{tabular}{|c|c|c|c || c|c|c|}
      $\#E$ & \# & $\#V$ & $G=(V,E,F)$ &  $\#F$ & $G^*$ \\
\hline
\hline
4 & 1 & 4  & Regular tetrahedron  &4& \\ 
\hline
8 & 1 & 5  & Square pyramid ($J_1$)&5&   \\ 
\hline
9 & 1 & 5  & \begin{tabular}{l}$3$-gonal\\ bipyramid ($J_{12}$)\end{tabular}&5& $3$-gonal prism  \\ 
\hline
	       10 & 1 & 6  & \begin{tabular}{l}$5$-gonal\\
			      pyramid ($J_2$)\end{tabular} &6&    \\ 
\hline
      12 & 1 & 6  & \begin{tabular}{c}\\
	       \begin{tikzpicture}[scale=.5]
\begin{scope}[scale=0.9]
\foreach \x in {1,...,3}
{
\draw[rotate=120*\x]
  (90:2) -- (210:2);
\draw[rotate=120*\x]
  (90:2) -- (0:0);

 }
\draw (90:2) -- (150:0.4) -- (210:2);
  \draw (0:0) -- (150:0.4);
\begin{scope}[xscale=-1]
 \draw (90:2) -- (150:0.4) -- (210:2);
\draw (0:0) -- (150:0.4);  x
\end{scope}
	       \end{scope}\end{tikzpicture}\\
		     (Biaugmented \\
		    tetrahedron)\end{tabular}
  &
 8  & \begin{tabular}{c}
       \begin{tikzpicture}[scale=.5]
       \begin{scope}[rotate=90,xscale=-1]

\foreach \x in {1,...,5}
{
\draw[rotate=72*\x]
	(0:0.8) -- (72:0.8);
}

 \draw (0:.8) --(0:2) -- (98:2);
 \draw (262:2) -- (0:2);
 \draw (216:.8) --(262:2) -- (288:.8);
 \draw (72:.8) --(98:2) -- (144:.8);
\end{scope}     \end{tikzpicture}      \end{tabular}   \\ 
\hline
12 & 1 & 6  & Regular octahedron & 8  & Cube   \\ 
\hline
15 & 1 & 7  & \begin{tabular}{l}$5$-gonal\\ bipyramid ($J_{13}$)\end{tabular}& 10 & $5$-gonal prism   
\end{tabular}
 \caption{All the 22 2-connected, simple, planar graphs such that every corner
 curvature is positive~(1/2).  The graphs in the first four, the sixth, and the
 seventh lines are in $\fm$, but the two graphs in the fifth line are not.\label{tbl:PCnC1}}
\end{table}

\begin{table}[ht]\centering
\begin{tabular}{|c|c|c|c || c|c|c|}
      $\#E$ & \# & $\#V$ & $G=(V,E,F)$ &  $\#F$ & $G^*$ \\
\hline
\hline
	      15 & 2 & 7  &\begin{tabular}{c}\\
			    \begin{tikzpicture}[yshift=1cm,scale=0.5]
\foreach \x in {1,...,3}
{
\draw[rotate=120*\x]
  (90:2) -- (210:2)--(150:0.5)--cycle;
\draw[rotate=120*\x]
  (0:0) --  (150:0.5) -- (270:0.5);
 }
			    \end{tikzpicture}\\
	      (Augmented octahedron)
     \end{tabular} &
	  10  &
	      \begin{tabular}{c}\\
	       \begin{tikzpicture}[scale=0.5]

\begin{scope}[scale=0.6,xshift=10cm]
\foreach \x in {0,...,2}
{
\draw[rotate=120*\x]  (210:3)--(90:3) -- (90:1) -- (150:1)--(0:0);
 \draw[rotate=120*\x] (210:1)--(150:1);
}
\end{scope}
	       \end{tikzpicture}\end{tabular}\\
\hline
  18 & 1 & 8  & \begin{tabular}{c}\\
			             \begin{tikzpicture}[yshift=1cm,scale=0.5]
\foreach \x in {1,...,3}
{
\draw[rotate=120*\x]
  (90:1.8) -- (210:2)--(150:0.5)--cycle;
\draw[rotate=120*\x]
  (0:0) --  (150:0.5) -- (270:0.5);
					 }
					 \draw (210:2) -- (90:2.5) -- (330:2);
					 \draw (90:2) -- (90:2.5);
					\end{tikzpicture}
	       \\

	       (Gyroelongated\\ triangular\\
	       bipyramid)\\
			     \end{tabular} &
12  & \begin{tabular}{c}
\begin{tikzpicture}[scale=.5]
		\begin{scope}[scale=0.6,xshift=10cm]
\foreach \x in {0,...,2}
{
\draw[rotate=120*\x]  (210:3)--(90:3) -- (90:1) -- (150:1)--(150:.5)--(30:.5);
 \draw[rotate=120*\x] (210:1)--(150:1);
}
\end{scope}
\end{tikzpicture}\\
	(Snub $3$-gonal prism)\\
	\cite[p.20]{MR2429120}
       \end{tabular}
  \\ 
\hline
18 & 2 & 8  & \begin{tabular}{c}\\
\begin{tikzpicture}[scale=0.5]
\begin{scope}[scale=0.9]
\foreach \x in {1,...,3}
{
\draw[rotate=120*\x]
  (90:2) -- (210:2)--(150:0.5)--cycle;
}

 \draw  (0:0) --  (150:0.5) -- (270:0.5)--(90:1);
 \draw  (0:0) --  (30:0.5) -- (270:0.5)--(90:1);

 \draw (30:.5) -- (90:1) -- (150:.5);
 \draw (90:2) -- (90:1);
\end{scope}
\end{tikzpicture} \\
	       (Snub disphenoid~($J_{84}$))
\end{tabular}
&
 12  & \begin{tabular}{c}\begin{tikzpicture}[scale=0.5]
\begin{scope}[scale=0.6,xshift=10cm]
\draw  (135:3)--(135:2) -- (150:1) -- (225:2)--(225:3)--cycle;
 \draw (0:0)--(150:1)--(225:2)--(270:1)--cycle;
 \draw (45:2)--(135:2);
 \draw (45:3)--(135:3);
 \draw (225:3)--(315:3);
\end{scope}
\begin{scope}[scale=0.6,xshift=10cm,xscale=-1]
\draw  (135:3)--(135:2) -- (150:1) -- (225:2)--(225:3)--cycle;
 \draw (0:0)--(150:1)--(225:2)--(270:1)--cycle;
 \draw (45:2)--(135:2);
 \draw (45:3)--(135:3);
 \draw (225:3)--(315:3);
\end{scope}
	       \end{tikzpicture}\end{tabular}   \\ 
\hline
21 & 1 & 9  & \begin{tabular}{c}\\
\begin{tikzpicture}[scale=0.5]
	       \begin{scope}[scale=0.6,yshift=1.5cm]
\foreach \x in {0,...,2}
{
\draw[rotate=120*\x]  (210:3)--(90:3) -- (90:1) -- (150:1)--cycle;
 \draw[rotate=120*\x] (210:1)--(150:1)--(90:3);
  \draw[rotate=120*\x] (-150:1)--(90:1);
}
\end{scope}
	       \end{tikzpicture} \\ (Triaugmented $3$-gonal \\
	      prism ($J_{51}$))\end{tabular}		    &
13  & \begin{tabular}{c}
       \begin{tikzpicture}[xscale=.25,yscale=.2,rotate=30]
	 \foreach \x in {0,...,2}
	 {
	 \draw[rotate=120*\x]
	 (0:0)--(0:1)--(30:2)--(90:2)--(120:1)--cycle;
	 \draw[rotate=120*\x] (30:2)--(0:4)--(-30:2);
	 }
	 \draw (120:4)--(240:6)--(0:4);
	 \draw (240:4)--(240:6);
      \end{tikzpicture}\\
      \end{tabular}   \\ 
\hline
	       24 & 1 & 10 & \begin{tabular}{c}\\
			      	       \begin{tikzpicture}[scale=0.2,xshift=3,yshift=-13]
		\begin{scope}
		 \draw
		 (-1,1)--(1,1)--(0,4)--(4,-3)--(0,-2)--(1,-1)--(0,-1)--(0,0)--(1,-1)--(1,1)--(0,0);
		 \draw (1,1)--(4,-3)--(1,-1);
		\end{scope}
		\begin{scope}[xscale=-1]
		 \draw
		 (-1,1)--(1,1)--(0,4)--(4,-3)--(0,-2)--(1,-1)--(0,-1)--(0,0)--(1,-1)--(1,1)--(0,0);
		 \draw (1,1)--(4,-3)--(1,-1);
		\end{scope}
		\draw (0,-1)--(0,-2);
		 \draw (4,-3)--(-4,-3);
	       \end{tikzpicture}\\
	       (Gyroelongated square\\
	       bipyramid ($J_{17}$))\\
			     \end{tabular}&
 16 &  \begin{tabular}{c}\begin{tikzpicture}[scale=.5]

\begin{scope}[scale=0.6,xshift=10cm,yshift=1.5cm,yscale=-1]
\foreach \x in {0,...,3}
{
\draw[rotate=90*\x]  (180:2)--(90:2) -- (90:1) -- (135:1)--(135:.5)--(45:.5);
 \draw[rotate=90*\x] (180:1)--(135:1);
}
\end{scope}
      \end{tikzpicture}\\ (Snub square prism)\\  \cite[p.20]{MR2429120}\end{tabular}\\  
\hline
30 & 1 & 12 & Regular icosahedron      &
		      20 & \begin{tabular}{l}Regular\\ dodecahedron\end{tabular}      \\  
      \hline
 \end{tabular}
 \caption{All the 22 2-connected, simple, planar graphs such that every corner
 curvature is positive (2/2).  All the graphs in this table are not in $\fm$.\label{tbl:PCnC}}
\end{table}



   \begin{figure}[ht]
    \centering
    \tikzset{>=latex}
\begin{tikzpicture}
\begin{scope}
\foreach \x in {1,...,5}
{
\coordinate (A\x) at (18+72*\x:0.8);
\coordinate (B\x) at (18+72*\x:1.5);
\coordinate (C\x) at (-18+72*\x:2);
 \coordinate (D\x) at (-18+72*\x:3);
}
 \draw[solid,->]  (C5) --  (B5);
 \draw[dashed,->]  (B3) -- (C4);
 \draw[very thick,dotted,->]  (B5) -- (C1);
 \draw[very thick,solid,->]  (B1) --  (C2);
  \draw[very thick,dashed,->] 	(C2) -- (B2);

 \draw[solid,->]   (B2) -- (C3);
 \draw[dashed,->] (C1) -- (B1);
 \draw[very thick,dotted,->] (C3) -- (B3);
 \draw[very thick,solid,->]  (C4) --  (B4);
 \draw[very thick,dashed,->] (B4) -- (C5);
\draw
	(A1) -- (A2) -- (A3)
	(A2) -- (B2)
	(A4) -- (B4);

\draw
	(A1) -- (A5) -- (A4)
	(A5) -- (B5)
	(A3) -- (B3);

\draw
	(A3) -- (A4)
	(A1) -- (B1)
	(B2) -- (C3);
\end{scope}
    \begin{scope}[scale=1,xshift=5cm,yscale=-1]
       \foreach \x in {1,...,5}
       {
 \coordinate (F\x) at (126+72*\x:1.4);
 \coordinate (G\x) at (54+72*\x:1.4);
 \coordinate (H\x) at (90+72*\x:2.2);
       \coordinate (I\x) at (90+72*\x:3.8);
       }
\foreach \x in {1,...,5}
 {
 \draw (0:0) -- (G\x);
 \draw (G\x) -- (F\x);
     }
     \draw[dashed,->] (H3) -- (G3);
     \draw[very thick,->] (G3) -- (H2);
     \draw[very thick,dashed,->] (H2) -- (G2);
     \draw[solid,->] (G2) -- (H1);
     \draw[very thick,dotted,->] (H1) -- (G1);
     \draw[dashed,->] (G1) -- (H5);
     \draw[very thick,->] (H5) -- (G5);
     \draw[very thick,dashed,->] (G5) -- (H4);
     \draw[solid,->] (H4) -- (G4);
     \draw[very thick,dotted,->] (G4) -- (H3);
     \end{scope}
    \end{tikzpicture}
    \caption{The only simple, 2-connected, projective planar graphs with
    positive corner curvature: Hemi-dodecahedron~(left) and hemi-icosahedron~(right).
In each,  arrows with the same style are identified.
    \label{fig:projective
 planar grpahs with positive corner curvature}}
  \end{figure}
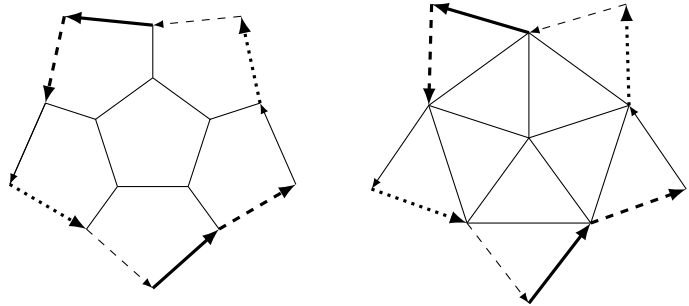


\end{document}